\DeclareMathAlphabet{\mathpzc}{OT1}{pzc}{m}{it}
\theoremstyle{plain}
\newtheorem{theorem}{\scshape Theorem}
\newtheorem{lemma}[theorem]{\scshape Lemma}
\theoremstyle{definition}
\newtheorem{remark}{\scshape Remark}
\newcommand{\jdel}{\mathcal{J}_\delta}
\theoremstyle{definition}
\DeclareMathAlphabet{\mathpzc}{OT1}{pzc}{m}{it}
\numberwithin{equation}{section}
\def\RR{{\mathbb R}}
\def\TT{{\mathbb S}^1}
\def\refdom{{\mathcal D}}
\def\div{{\operatorname{div}}}
\def\curl{{\operatorname{curl}}}
\def\p{{\partial\hspace{1pt}}}
\def\jump#1{{[\hspace{-2pt}[#1]\hspace{-2pt}]}}
\def\Bigjump#1{{\Big[\hspace{-4.5pt}\Big[#1\Big]\hspace{-4.5pt}\Big]}}
\def\({{(\hspace{-2pt}(}}
\def\){{)\hspace{-2pt})}}
\title{Well-posedness and decay to equilibrium for the Muskat problem with discontinuous permeability}
\author{Rafael Granero-Belinch\'on
\\{\small Univ Lyon, Universit\'e Claude Bernard Lyon 1}
\\{\small CNRS UMR 5208, Institut Camille Jordan}
\\{\small 43 blvd. du 11 novembre 1918}
\\{\small F-69622 Villeurbanne cedex, France}
\\{\footnotesize email: granero@math.univ-lyon1.fr}
\and
 Steve Shkoller
\\Department of Mathematics
\\University of California
\\Davis, CA 95616 USA
\\{\footnotesize email: shkoller@math.ucdavis.edu}
}
\date{\today}
\begin{document}

\maketitle

We first prove local-in-time well-posedness  for the  Muskat problem, modeling fluid flow in a two-dimensional inhomogeneous porous media. 
The permeability  of the porous medium is described by a step function,  with a jump
discontinuity across the fixed-in-time curve $(x_1,-1+f(x_1))$, while the interface separating the fluid from the vacuum region is given 
by the time-dependent curve $(x_1,h(x_1,t))$. 
Our estimates are based on a new methodology that relies upon a careful study of the PDE system, coupling Darcy's law and 
incompressibility of the fluid, rather than the analysis of the singular integral contour equation for the interface function $h$.  We are able to 
develop an existence theory for  any initial interface given by  $h_0 \in H^2$ and any permeability curve-of-discontinuity that is given by $f \in H^{2.5}$. In particular, our method allows for both curves to have (pointwise) unbounded curvature. In the case that  the permeability discontinuity
is the set $f=0$, we prove global existence and decay to equilibrium for small initial data. This decay is obtained using a new energy-energy dissipation inequality that couples tangential derivatives of the velocity in the bulk of the fluid with the curvature of the interface. To the best of our knowledge, this is the first global existence result for the Muskat problem with discontinuous permeability.

 \vspace{.2 in}
 
 \tableofcontents

\section{Introduction}
\subsection{The Muskat problem}
The Muskat problem, introduced in \cite{Muskat}, models the dynamics of an evolving material interface separating two fluids flowing through a porous medium, \emph{i.e.} a medium consisting of a solid matrix with fluid-filled pores. Porous media flow is modelled by Darcy's law
\begin{equation}\label{basic-model}
\frac{\mu}{\beta}u=-\nabla p-(0,g\rho)^T,
\end{equation} 
where $\mu$ is the viscosity of the fluid,  $\rho$ denotes the density,  $u$ is the incompressible fluid velocity, and  $p$ is  the pressure function;
additionally, 
$\beta>0$ denotes the permeability of the solid matrix, and $g$ is the acceleration due to gravity, which we shall henceforth set to $1$.   Darcy's law
\eqref{basic-model} is an {\it empirical} relation between momentum and force  (see, for example, \cite{bear, NB}), and replaces conservation of 
momentum, which is used to model the evolution of inviscid fluid flows.

The purpose of this paper is to study the evolution of an interface moving through porous media with a discontinuous permeability. As the permeability takes two different values, this case is known in the literature as the \emph{inhomogeneous Muskat problem}. Specifically, we are interested in the well-posedness and decay to equilibrium for the inhomogeneous Muskat problem.

\begin{figure}[htbp]
\centering
\includegraphics[scale=0.3]{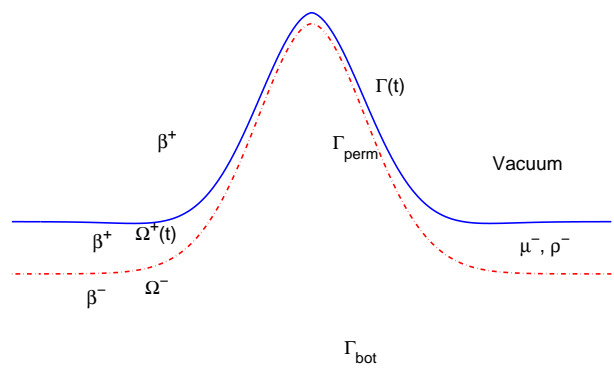}
\vspace{-.6 in}
\caption{{\footnotesize The solid curve (blue) is the interface $\Gamma(t)$ and the dashed curve (red) denotes the interface
$\Gamma_{\operatorname{perm}}$, across which  the permeability is discontinuous.}}
\label{fig1}
\end{figure} 
We let $\TT$ denote the circle, so that functions $h: \mathbb{S}  ^1 \to \mathbb{R}$ are identified with $[-\pi, \pi]$-periodic functions on $ \mathbb{R} $.
As shown in Figure \ref{fig1}, 
we consider a porous medium occupying an open time-dependent subset $\Omega(t)  \subset \TT \times \mathbb{R}   $ such that
$$\Omega(t)={\Omega^+(t)}\cup{\Omega^-} \cup \Gamma_{\operatorname{perm}}\,,$$ 
where
\begin{subequations} \label{domains}
\begin{align} 
\Omega^+(t)&=\{(x_1,x_2)\in \TT\times \RR,\;\; -1+f(x_1)<x_2<h(x_1,t)\} \,, \\
\Omega^-&=\{(x_1,x_2)\in \TT\times \RR,\;\; -2<x_2<-1+f(x_1)\}\,, \\
\Gamma_{\operatorname{perm}}&=\{(x_1,-1+f(x_1)),\;x_1\in\TT\} \,,
\end{align} 
\end{subequations}
and
where the functions $f$ and $h$ satisfy
\begin{equation}\label{notouching}
\min_{x_1 \in \TT  }f(x_1) >-1  \ \text{ and } \ h(x_1,0)>-1+f(x_1).
\end{equation}
The fixed-in-time permeability interface
$\Gamma_{\operatorname{perm}}$ denotes the curve, across  which the permeability function $\beta(x)$ is discontinuous; specifically,  
 the permeability function $\beta(x)$ is defined as
$$
\beta(x)=\left\{\begin{array}{ll}\beta^+ & \text{ in }\Omega^+(t)\\
\beta^- & \text{ in }\Omega^-\,\end{array}\right.,
$$
for given constants $\beta^\pm>0$.   The domain for this problem is also an unknown; thus, 
we must  track the evolution of the time-dependent interface
or free-boundary $\Gamma(t)$, which is defined as the set
$$
\Gamma(t)=\{(x_1,h(x_1,t)),\;x_1\in\TT\} \,.
$$

For simplicity, we shall set the fluid density $\rho$ and viscosity $\mu$  to $1$. As the fluid is incompressible, 
it follows that
$$
\jump{u\cdot n_{\operatorname{perm}}}=0\text{ on }\Gamma_{\operatorname{perm}}\times[0,T] \,.
$$

With the domains defined, the Muskat problem consists of the
following system of coupled equations:
%
\begin{subequations}\label{laplacian}
\begin{alignat}{2}
\frac{u^\pm}{\beta^\pm}+\nabla p^\pm&=-e_2,  \qquad&&\text{in } \Omega^\pm(t)\times[0,T]\,,\\
\nabla\cdot u^\pm &=0,  \qquad&&\text{in } \Omega^\pm(t)\times[0,T]\,,\\
\jump{p} &= 0 &&\text{on }\Gamma_{\operatorname{perm}}\times[0,T],\\
\jump{\nabla p\cdot n_{\operatorname{perm}}}  &= -\Bigjump{\frac{1}{\beta}}u^+\cdot n_{\operatorname{perm}} \qquad &&\text{on }\Gamma_{\operatorname{perm}}\times[0,T],\\
p^+ &= 0 &&\text{on }\Gamma(t)\times[0,T],\\
\mathcal{V}(\Gamma(t)) &= u^+\cdot n \qquad&&\text{on } \Gamma(t)\times[0,T]\,,\\
u^-\cdot e_2 &= 0 &&\text{on }\Gamma_{\operatorname{bot}}\times[0,T],
\end{alignat}
\end{subequations}
where $\mathcal{V}(\Gamma(t))$ denotes the normal component of the velocity of the time-dependent free-boundary $\Gamma(t)$, $n$ is the
 (upward) unit normal to $\Gamma(t)$, $n_{\operatorname{perm}}$ is the (upward-pointing) unit normal to $\Gamma_{\operatorname{perm}}$, and 
 $\jump{f} = f^+ - f^-$ denotes the jump of a discontinuous function $f$ across $\Gamma_{\operatorname{perm}}$. 

\subsection{A brief history of the analysis of the Muskat problem}

Darcy's law \eqref{basic-model} is a standard model for flow in aquifers, oil wells, or geothermal reservoirs, and it is therefore of  practical importance in geoscience (see, for example, \cite{CF,Parseval-Pillai-Advani:model-variation-permeability} and the references therein). Furthermore, the Muskat problem is equivalent to the Hele-Shaw cell problem with gravity
(see \cite{HeleShaw:motion-viscous-fluid-parallel-plates}) for flow between two thinly-spaced parallel plates.

There has a been a great deal of mathematical analysis of 
both the Muskat problem and the Hele-Shaw cell, and we shall only review a small fraction of the results that are, in some sense, most closely related to our result.

For the Muskat problem with a continuous permeability function, existence of solutions in the Sobolev space $H^3$ has been established by
C\'ordoba \&  Gancedo \cite{c-g07, c-g10}, C\'ordoba, C\'ordoba \&  Gancedo in \cite{c-c-g10},  and C\'ordoba, Granero-Belinch\'on \&  Orive \cite{CGO}, using the singular integral contour equation for the height function $h$.     Cheng, Granero-Belinch\'on \& Shkoller \cite{CGS} 
introduced the direct PDE approach (modified for use, herein), and established an $H^2$ existence theory (see also Cheng, Coutand \& Shkoller \cite{cheng2012global} for a similar approach to the horizontal Hele-Shaw cell problem).  This was followed by an $H^2$ existence theory by Constantin, Gancedo, Shvydkoy \& Vicol \cite{CGSVfiniteslope} using the singular integral approach; they also
obtained a finite-slope global existence result. Very recently, when surface tension effect are neglected, local existence in $H^{s}$ with $s>3/2$ has been obtained by Matioc \cite{matioc}. In the same paper, Matioc also proved an $H^s$, $s>2$ local existence result for the case where surface tension effects are considered. In the presence of surface tension, local existence in $H^6$ was also obtained by Ambrose \cite{ambrose2004well, AmbroseST}.

In the case of a discontinuous permeability function (with a jump across the flat curve $(x_1,-1)$), the local-in-time existence of solutions has been proved by 
Berselli, C\'ordoba \& Granero-Belinch\'on \cite{BCG}. In the case of two fluids with different viscosities and densities and permeability function with a jump given by an arbitrary smooth curve $(f_1(\alpha),f_2(\alpha))$ the local-in-time existence of solutions has been established by Pern\'as-Casta\~no \cite{pinhomo} .

For the case of a continuous permeability, there are a variety of results showing global existence of strong solutions under certain conditions on the initial data. In particular, Cheng, Granero-Belinch\'on \& Shkoller \cite{CGS} proved global existence under restrictions on the size of $\|h_0\|_{H^2}$, while C\'ordoba, Constantin, Gancedo \& Strain \cite{ccgs-10} and C\'ordoba, Constantin, Gancedo, Strain \& Rodr\'iguez-Piazza \cite{ccgs-13} proved global existence under restrictions on the size of $\|\widehat{h'}_0\|_{L^1}$, where $\hat{h}$ denotes the Fourier transform. The global existence of weak solution has been proved by C\'ordoba, Constantin, Gancedo \& Strain \cite{ccgs-10} and Granero-Belinch\'on \cite{G} for initial data satisfying restrictions on $\|h_0\|_{\dot{W}^{1,\infty}}$ and $\|h_0\|_{W^{1,\infty}}$, respectively. Note that the condition on $\|h_0\|_{L^\infty}$ in \cite{G} is a consequence of having a bounded porous media.

Finite time singularities of \emph{turning} type are known to occur. A turning wave is a solution which starts as a graph, but then turns-over and loses the graph property. The existence of such waves in the Rayleigh-Taylor stable regime has been established by Castro, C\'ordoba, Fefferman, Gancedo \& L\'opez-Fern\'andez \cite{ccfgl}, C\'ordoba, Granero-Belinch\'on \& Orive-Illera \cite{CGO}, Berselli, C\'ordoba \& Granero-Belinch\'on \cite{BCG} and G\'omez-Serrano \& Granero-Belinch\'on \cite{GG}. 

Finally, some deeper insight on the turning behaviour has been obtained by C{\'o}rdoba, G{\'o}mez-Serrano \& Zlato\u{s} \cite{CGSZ, CGSZ2}, where, in particular, they proved that certain solutions to the two-phase Muskat problem start as a graph, then turn-over and lose the graph property and hence violate the Rayleigh-Taylor condition but then stabilize and return to being a graph. Furthermore Castro, C\'ordoba, Fefferman \& Gancedo \cite{castro2012breakdown} also proved that there exist interfaces such that, after turning, the interface is no longer analytic and, in fact, 
$$
\limsup_{t\rightarrow T}\|z(t) \|_{C^4}=\infty,
$$
for a finite time $T>0$.

 Gancedo \& Strain \cite{gancedo2014absence} have shown that
the finite-time  \emph{splash} and \emph{splat} singularities (a self-intersection of a locally smooth interface) cannot occur
 for the two-phase Muskat problem (see also Fefferman, Ionescu \& Lie \cite{FeIoLi2016} and Coutand \& Shkoller \cite{CoSh2016}).
 However, in the case of the one-phase Muskat problem, Castro, C\'ordoba, Fefferman \& Gancedo \cite{ccfgonephase} proved that the splash singularities may occur while C\'ordoba \& Pern\'as-Casta\~no \cite{cponephase} showed that splat singularities cannot occur.   See also Coutand \& Shkoller \cite{CoSh2014} for splash and splat singularities for the 3-D Euler equations and related
models.

Let us also mention that several results for the multiphase Muskat problem have been obtained in the completely different framework of \emph{little H\"older spaces} $h^{k+\alpha}$ by Escher \& Matioc \cite{e-m10}, Escher, Matioc \& Matioc \cite{escher2011generalized} and Escher, Matioc \& Walker \cite{emw15}. 

Very recently, a regularity result in H\"{o}lder spaces for the one-phase Hele-Shaw problem has been obtained by Chang-Lara \& Guill\'en \cite{chang2016free} using the hodograph transform. Also, Pr\"uss \& Simonett \cite{pruess2016muskat} studied the two-phase Muskat problem in a more geometric framework using the Hanzawa transform. In particular, these authors show well-posedness, characterize and study the dynamic stability of the equilibria. 

Finally, using a convex integration approach, Castro, C\'ordoba \& Faraco \cite{castro2016mixing} very recently proved the existence of weak solutions for the Muskat problem in the case where the denser fluid lies above the lighter fluid, so, it is in the Rayleigh-Taylor unstable regime. Remarkably, these solutions develop a \emph{mixing zone} (a strip containing particles from both phases and, consequently, with fluid particles having both densities), growing linearly in time.

\subsection{Methodology}
As noted above, most prior existence theorems have relied upon  the  singular integral contour equation for the height function $h$; in the case 
of the infinitely deep two-phase Muskat problem with continuous permeability, the evolution equation for $h$ can be written as
\begin{eqnarray}
h_t(x_1)=\text{p.v.}\int_{\mathbb{R}}\frac{h'(x_1)-h'(x_1-y)}{y}\frac{1}{1+\left(\frac{h(x_1)-h(x_1-y)}{y}\right)^2}dy;
\label{Ieqv2}
\end{eqnarray}
see, for example,  \cite{c-g07} for the derivation.

The contour equation \eqref{Ieqv2} depends crucially on the geometry of the domain and the permeability function. 
In particular, when the porous medium has finite depth (equal to $\pi/2$) and the permeability function is discontinuous across the curve 
$(x_1,-1)$, it was shown in  \cite{BCG} that \eqref{Ieqv2} takes the form:
\begin{eqnarray}
h_t(x_1)&=&\frac{\beta^+(-\jump{\rho})}{4\pi}\text{p.v.}\int_{\mathbb{R}}\frac{(h'(x_1)-h'(y))\sinh(x_1-y)}{\cosh(x_1-y)-\cos(h(x_1)-h(y))}dy\nonumber\\
&&+\frac{\beta^+(-\jump{\rho})}{4\pi}\text{p.v.}\int_{\mathbb{R}}\frac{(h'(x_1)-h'(y))\sinh(x_1-y)}{\cosh(x_1-y)+\cos(h(x_1)+h(y))}dy\nonumber\\
&&+\frac{1}{4\pi}\text{p.v.}\int_{\mathbb{R}}\frac{\varpi_2(y)(\sinh(x_1-y)+h'(x_1)\sin(h(x_1)+1))}{\cosh(x_1-y)-\cos(h(x_1)+1)}dy\nonumber\\
&&+\frac{1}{4\pi}\text{p.v.}\int_{\mathbb{R}}\frac{\varpi_2(y)(-\sinh(x_1-y)+h'(x_1)\sin(h(x_1)-1))}{\cosh(x_1-y)+\cos(h(x_1)-1)}d\beta\label{IIeqv2},
\end{eqnarray}
where
\begin{eqnarray}
\varpi_2(x_1)&=&\frac{\beta^+-\beta^-}{\beta^++\beta^-}\frac{\beta^+(-\jump{\rho})}{2\pi}\text{p.v.}\int_{\mathbb{R}} h'(y)\frac{\sin(1+h(y))dy}{\cosh(x_1-y)-\cos(1+h(y))}\nonumber\\
&&-\frac{\beta^+-\beta^-}{\beta^++\beta^-}\frac{\beta^+(-\jump{\rho})}{2\pi}\text{p.v.}\int_{\mathbb{R}} h'(y)\frac{\sin(-1+h(y))dy}{\cosh(x_1-y)+\cos(-1+h(y))}\nonumber\\
&&+\frac{\left(\frac{\beta^+-\beta^-}{\beta^++\beta^-}\right)^2}{\sqrt{2\pi}}\frac{\beta^+(-\jump{\rho})}{2\pi}G_{\beta}*\text{p.v.}\int_{\mathbb{R}}\frac{h'(y)\sin(1+h(y))dy}{\cosh(x_1-y)-\cos(1+h(y))}\nonumber\\
&&-\frac{\left(\frac{\beta^+-\beta^-}{\beta^++\beta^-}\right)^2}{\sqrt{2\pi}}\frac{\beta^+(-\jump{\rho})}{2\pi}G_{\beta}*\text{p.v.}\int_{\mathbb{R}}\frac{h'(y)\sin(-1+h(y))dy}{\cosh(x_1-y)+\cos(-1+h(y))}, \label{IIw2defc}
\end{eqnarray}
with 
$$
G_{\beta}(x_1)=\mathcal{F}^{-1}\left(\frac{\mathcal{F}\left(\frac{\sin(2)}{\cosh(x_1)+\cos(2)}\right)(\zeta)}{1+\frac{\frac{\beta^+-\beta^-}{\beta^++\beta^-}}{\sqrt{2\pi}}\mathcal{F}\left(\frac{\sin(2)}{\cosh(x_1)+\cos(2)}\right)(\zeta)}\right) \,,
$$
a Schwartz function, and where $\mathcal{F}$ denotes the Fourier transform. 
Let us emphasize that, due to the non-local character of $\varpi_2$ given by \eqref{IIw2defc}, the contour equation \eqref{IIeqv2} is
significantly more challenging to analyse than \eqref{Ieqv2}.
Note also, from the definition of $G_{\beta}(x_1)$ that the highly non-local convolution terms in \eqref{IIw2defc} are not explicitly defined.

Because of the complications inherent in the singular integral approach of \eqref{IIw2defc}, we shall instead analyze the system \eqref{laplacian}
directly.  As \eqref{laplacian} is set on the time-dependent {\it a priori} unknown domain $\Omega(t)$, in order to build an existence theory,
we first pull-back this system of equations onto a fixed-in-time spatial domain.
We use a  carefully chosen change-of-variables that transforms the free-boundary problem \eqref{laplacian} into a system of
equations set on a smooth and fixed domain, but having 
 time-dependent coefficients.
 
 To pull-back our problem, 
we employ a  family of
 diffeomorphisms $\psi^\pm$ which are elliptic extensions of the interface parametrizations, and thus have optimal $H^s$ Sobolev regularity. 
 The time-dependent coefficients (in the pulled-back description) arise from differentiation and inversion of these maps $\psi^\pm$;  by studying the transformed 
 Darcy's Law, we obtain a new higher-order energy integral that provides the regularity of the moving interface $\Gamma(t)$.   Additionally,
 we obtain an $L^2$-in-time  parabolic regularity gain, analogous to the regularity gain for solutions to the heat equation, except that we
 gain a $1/2$-derivative in space rather than a full derivative.  The regularity of the interface $\Gamma(t)$ as well as the improved
 $L^2$-in-time parabolic regularity gain are found from the non-linear structure of the pulled-back representation of the Muskat problem.  In
 particular, we do no rely on the explicit structure of the singular integral contour equation, and as such, we are free to study general domain
 geometries and permeability functions.
 

\subsection{The main results}
As we will show, the Rayleigh-Taylor (RT) stability condition, given by $ - \frac{\partial p}{\partial n} > 0$ on $\Gamma(t)$, 
is a sufficient condition for well-posedness of the Muskat problem \eqref{laplacian} in Sobolev spaces.   
 In particular, with
$$
 p_0 :=p ( \cdot , 0) \ \text{ and } \
\Gamma := \Gamma(0)\,, $$
and letting
$N:= n( \cdot , 0)$ denote the outward unit normal to $\Gamma$, we prove that
for any initial interface $\Gamma$ of arbitrary size and of class $H^2$,
 chosen such that the RT stability condition
\begin{equation}\label{RT}
- \frac{\partial p_0}{\partial N} > 0 \ \text{ on } \ \Gamma
\end{equation}
is satisfied, 
there exists a  unique  solution $(u^\pm(x,t),p^\pm(x,t),h(x_1,t))$ to the one-phase Muskat problem with discontinuous permeability function.

More precisely, we prove the following
\begin{theorem}[Local well-posedness in $H^2$]\label{localsmall} Suppose the initial interface $\Gamma$ is given as the graph
$(x_1, h_0(x_1))$ where  $h_0\in H^2(\TT)$ and $\int_{\TT} h_0(x_1) dx_1 =0$, and  that the RT condition \eqref{RT} is satisfied.
Let $\Gamma_{\operatorname{perm}}$ be given as the graph 
$(x_1, -1+f(x_1))$ for a function $f\in H^{2.5}(\TT)$. Assume also that \eqref{notouching} holds. Then, there exists a time $T(h_0,f)>0$ and a unique solution  
\begin{align*} 
&h\in C([0,T(h_0,f)];H^2(\TT))\cap L^2(0,T(h_0,f);H^{2.5}(\TT)) \,, \\
&u^\pm\in C([0,T(h_0,f)];H^{1.5}(\Omega^\pm(t)))\cap L^2(0,T(h_0,f);H^{2}(\Omega^\pm(t))) \,, \\
&
p^\pm\in C([0,T(h_0,f)];H^{2.5}(\Omega^\pm(t)))\cap L^2(0,T(h_0,f);H^{3}(\Omega^\pm(t))) \,,
\end{align*} 
to the system \eqref{laplacian},
satisfying
$$
\|h(t)\|_{L^2(\TT)}^2+2\int_0^t\left\|\frac{u^+(s)}{\beta^+}\right\|_{L^2(\Omega^+(s))}^2d s
+2\int_0^t\left\|\frac{u^-(s)}{\beta^-}\right\|_{L^2(\Omega^-)}^2ds  =\|h_0\|_{L^2(\TT)}^2,
$$
and
\begin{align*} 
&
\|h\|_{C([0,T(h_0,f)],H^2(\TT))}+\|h_t\|_{L^2(0,T(h_0,f);H^{1.5}(\TT))}+\|h\|_{L^2(0,T(h_0,f);H^{2.5}(\TT))} \\
& \qquad + 
\|p\|_{C([0,T(h_0,f)],H^{2.5}(\Omega^+(t)\cup\Omega^-))}+\|p\|_{L^2(0,T(h_0,f);H^{3}(\Omega^+(t)\cup\Omega^-))} \\
& \qquad +
\|u\|_{C([0,T(h_0,f)],H^{1.5}(\Omega^+(t)\cup\Omega^-))}+\|u\|_{L^2(0,T(h_0,f);H^{2}(\Omega^+(t)\cup\Omega^-))}\leq C(h_0,f) 
\end{align*} 
for a constant $C(h_0,f)$ which depends on $h_0$ and $f$.
\end{theorem}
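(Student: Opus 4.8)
The plan is to bypass the singular integral contour equation entirely and work directly with the pulled-back version of system \eqref{laplacian}, extending the direct-PDE method of \cite{CGS} to the present transmission setting. First I would introduce diffeomorphisms $\psi^\pm(\cdot,t)$ from fixed reference domains onto $\Omega^\pm(t)$, constructed as elliptic extensions into the bulk of the graph parametrizations $x_1\mapsto(x_1,h(x_1,t))$ of $\Gamma(t)$ and $x_1\mapsto(x_1,-1+f(x_1))$ of $\Gamma_{\operatorname{perm}}$ (the map for $\Omega^-$ is in fact time-independent, since $f$ does not depend on $t$, and the non-degeneracy of $\psi^\pm$ for short time follows from \eqref{notouching}). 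Elliptic regularity makes $\psi^+$ inherit $H^{2.5}$ regularity from $h_0\in H^2$ and $\psi^-$ inherit $H^3$ from $f\in H^{2.5}$ --- exactly what is needed below --- and uses only Sobolev bounds, so pointwise unbounded curvature is harmless. Pulling back \eqref{laplacian} yields, on the fixed domain, a second-order variable-coefficient \emph{elliptic transmission problem} for the pressure $p$ (coefficients built from $D\psi^\pm$ and $(\det D\psi^\pm)^{-1}$): $p^\pm$ solves a uniformly elliptic equation, $p^+=0$ on $\Gamma$, the jump conditions $\jump{p}=0$ and $\jump{\nabla p\cdot n_{\operatorname{perm}}}=-\jump{1/\beta}\,u^+\cdot n_{\operatorname{perm}}$ on $\Gamma_{\operatorname{perm}}$, and $\partial_2 p^-=-1$ on $\Gamma_{\operatorname{bot}}$; this is coupled to the transported kinematic law which, in graph variables and after using $p^+=0$ on $\Gamma$ to eliminate the tangential derivative, reads $h_t=-\beta^+\sqrt{1+h_{x_1}^2}\,(\partial_N p^+)|_\Gamma-\beta^+$.

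\textbf{The basic energy identity.} Testing the transported Darcy law against $u^\pm$, integrating by parts with $\nabla\cdot u^\pm=0$, and noting that the $\Gamma_{\operatorname{perm}}$ boundary terms cancel by $\jump{p}=0$ and continuity of the normal velocity (and vanish on $\Gamma$ and on $\Gamma_{\operatorname{bot}}$), one identifies $\sum_{\pm}\int_{\Omega^\pm}u_2^\pm\,dx=\tfrac12\frac{d}{dt}\|h\|_{L^2(\TT)}^2$, which after time integration gives the stated $L^2$ balance.

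\textbf{The higher-order energy and the parabolic $1/2$-gain.} This is where the real work lies. Apply $\partial_{x_1}^2$ to the kinematic law and test against $\partial_{x_1}^2 h$ (equivalently, work at the level of $|\partial_{x_1}|^{1/2}\partial_{x_1}^2 h$). The top-order term is $-\beta^+\sqrt{1+h_{x_1}^2}\,\partial_{x_1}^2\big((\partial_N p^+)|_\Gamma\big)$; recognizing the linearization $h\mapsto -(\partial_N p^+)|_\Gamma$ of the Dirichlet--to--Neumann-type map of the transmission problem as a positive, order-one, self-adjoint (to leading order) operator, and using the RT sign $-\partial_N p_0>0$, one extracts a coercive contribution and arrives at
\[
\tfrac12\frac{d}{dt}\|h\|_{H^2(\TT)}^2+c\,\|h\|_{\dot{H}^{2.5}(\TT)}^2\le P\big(\|h\|_{H^2(\TT)},\|f\|_{H^{2.5}(\TT)}\big),
\]
where $c>0$ depends on the RT lower bound and $P$ is a fixed polynomial. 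Closing this requires: (i) elliptic estimates for the variable-coefficient transmission problem, $\|p\|_{H^3(\Omega^+\cup\Omega^-)}\le P(\|h\|_{H^2},\|f\|_{H^{2.5}})(1+\|h\|_{H^{2.5}})$, the top-order piece being absorbed by the coercive term; (ii) commutator and fractional product estimates for the variable coefficients and the factor $\sqrt{1+h_{x_1}^2}$, all carried out at the borderline regularity $H^2$, which is what permits unbounded curvature; and (iii) a Rellich/Pohozaev-type identity to control the transmission interface $\Gamma_{\operatorname{perm}}$ --- the new point relative to \cite{CGS} being that the jump $\jump{1/\beta}$ in the flux condition does not destroy coercivity precisely because $\beta^\pm>0$ keeps the transmission bilinear form uniformly elliptic. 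The RT condition persists on a short time interval by continuity of $h\mapsto(\partial_N p^+)|_\Gamma$ in the relevant norms together with $\|h(t)-h_0\|_{H^2}\to0$. Integrating the differential inequality yields $h\in C([0,T];H^2)\cap L^2(0,T;H^{2.5})$, and then the chain $h\in H^{2.5}\Rightarrow\psi^\pm$ gains half a derivative $\Rightarrow p^\pm\in H^3\Rightarrow(\partial_N p^+)|_\Gamma\in H^{1.5}\Rightarrow h_t\in H^{1.5}$ (for a.e.\ $t$, in $L^2_t$), which, together with $u^\pm=-\beta^\pm(\nabla p^\pm+e_2)$, produces precisely the claimed regularity for $h$, $p^\pm$, $u^\pm$ and the final a priori bound $C(h_0,f)$.

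\textbf{Existence, uniqueness, and the main obstacle.} For existence I would regularize --- e.g.\ by adding an artificial parabolic term $-\varepsilon\,\partial_{x_1}^4 h$ to the kinematic equation (or by mollifying the coefficients, or a Galerkin truncation) --- solve the regularized system by a contraction-mapping argument on a short interval, establish the higher-order estimates above uniformly in $\varepsilon$ on a time $T=T(h_0,f)$ independent of $\varepsilon$, and pass to the limit, using Aubin--Lions compactness to handle the nonlinear terms; the limit solves \eqref{laplacian} with the stated regularity, and $C([0,T];\cdot)$ continuity in time follows from the interpolation between $L^2_tH^{2.5}$ and $H^1_tH^{1.5}$. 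Uniqueness follows by deriving the equation satisfied by the difference of two solutions with the same data and running a low-order ($L^2$ or $H^{1/2}$) energy estimate with the same RT coercivity, followed by Gronwall; the velocities and pressures then agree as well. The main obstacle throughout is the higher-order estimate: correctly isolating the leading coercive Dirichlet-to-Neumann-type operator coming from the \emph{transmission} problem and controlling all commutators at the borderline regularity $H^2$ for $h$ (and $H^{2.5}$ for $f$), while showing that the permeability jump $\jump{1/\beta}$ is harmless --- this is where essentially all of the difficulty resides.
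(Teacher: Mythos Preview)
Your strategy---ALE pullback by harmonic extensions, an $H^2$ energy with a parabolic half-derivative gain, regularization and passage to the limit---is the paper's, and the basic $L^2$ law is derived the same way. The genuine difference is in the higher-order estimate. You differentiate the \emph{kinematic boundary law} $h_t=-\beta^+\sqrt{1+h_{x_1}^2}\,(\partial_Np^+)-\beta^+$ twice and test against $h''$, relying on the (nonlinear, transmission) Dirichlet--to--Neumann map being coercive of order one to produce the $|h|_{2.5}^2$ dissipation. The paper instead differentiates the \emph{bulk Darcy law} twice tangentially and tests against $Jv''$ on $\refdom^\pm$: the dissipation $\int \tfrac{J}{\beta}|v''|^2$ appears directly, the energy $\tfrac12|h''|_0^2$ (weighted by $-q,_2(0)/J(0)$, which is precisely where the RT sign enters) emerges from the $\Gamma$ boundary integrals after integration by parts, and then a Hodge decomposition (Lemma~\ref{Hodge2}) together with a normal-trace lemma upgrades $\|v''\|_{0,\pm}$ to $\|v\|_{2,\pm}$, which in turn gives $|h|_{2.5}$ via the tangential trace identity \eqref{h2.5pointwise}. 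Your route is more boundary-centric and avoids the Hodge machinery, at the cost of having to establish DtN coercivity for the full nonlinear transmission operator at the borderline regularity; the paper's bulk route sidesteps that analysis but pays with the Hodge and normal-trace lemmas. Two further points of divergence: on $\Gamma_{\operatorname{perm}}$ the paper uses no Rellich/Pohozaev identity---all interface terms are estimated directly by $H^{0.5}$--$H^{-0.5}$ duality and the product rule \eqref{almostalgebra}, and this is exactly where the hypothesis $f\in H^{2.5}$ is consumed; and for existence the paper uses a double-mollification $\epsilon\kappa$-scheme (smooth the initial data by $\mathcal{J}_\epsilon$, smooth the evolving interface in the diffeomorphism by $\mathcal{J}_\kappa^2$, take $\kappa\to0$ then $\epsilon\to0$) rather than an artificial-viscosity or Galerkin regularization.
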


\begin{remark}
It is easy to see that if $h_0(x_1)=f(x_1)=0$, the solution is given by
\begin{equation}\label{gs001}
u^\pm(x,t)=0,\;h(x_1,t)=0,\;p^\pm(x,t)=-x_2,
\end{equation} 
and the RT condition is satisfied. There exist infinitely many initial data $h_0$ satisfying the RT condition;  for example, small
perturbations  of \eqref{gs001} satisfy the RT condition \eqref{RT} via implicit function theorem arguments.
\end{remark}

\begin{theorem}[Global well-posedness and decay to equilibrium in $H^2$]\label{globalsmall} Suppose the initial interface $\Gamma$ is given as the graph
$(x_1, h_0(x_1))$ where  $h_0\in H^2(\TT)$ and $\int_{\TT} h_0(x_1) dx_1 =0$. Let $\Gamma_{\operatorname{perm}}$ be given as the graph 
$(x_1, -1)$. Then, there exists a constant $\mathscr{C}$ such that if
$$
|h_0|_2<\mathscr{C},
$$
the RT condition \eqref{RT} is satisfied and there exists a unique solution 
\begin{align*} 
&h\in C([0,\infty);H^2(\TT))\cap L^2(0,\infty;H^{2.5}(\TT)) \,, \\
&u^\pm\in C([0,\infty);H^{1.5}(\Omega^\pm(t)))\cap L^2(0,\infty;H^{2}(\Omega^\pm(t))) \,, \\
&
p^\pm\in C([0,\infty);H^{2.5}(\Omega^\pm(t)))\cap L^2(0,\infty;H^{3}(\Omega^\pm(t))) \,,
\end{align*} 
to the system \eqref{laplacian},
satisfying
\begin{align*} 
\|h(t)\|_{H^2(\TT)}\leq \|h_0\|_{H^2(\TT)}e^{-\gamma t/2} 
\end{align*} 
for a constant $\gamma(h_0,\beta^\pm)$, which depends on $h_0$ and $\beta^\pm$.
\end{theorem}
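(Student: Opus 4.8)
The plan is to combine the local theory of Theorem~\ref{localsmall} with a global-in-time energy--dissipation inequality that forces exponential decay, closing everything by a continuation argument. Since $f\equiv 0$, the state $(u,p,h)=(0,-x_2,0)$ is an equilibrium, and, as recorded in the Remark, the RT condition \eqref{RT} holds there with a strict margin; because $p_0$ depends continuously on $h_0$ through the elliptic system \eqref{laplacian}, there is a constant $\mathscr{C}>0$ such that $\|h_0\|_{H^2}<\mathscr{C}$ already guarantees \eqref{RT}. The goal will be to prove the a priori bound: if a local solution exists on $[0,T]$ with $\sup_{[0,T]}\|h(t)\|_{H^2}<\mathscr{C}$, then in fact $\|h(t)\|_{H^2}\le\|h_0\|_{H^2}e^{-\gamma t/2}$ on $[0,T]$. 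This keeps $\|h(t)\|_{H^2}$ strictly below $\mathscr{C}$, so \eqref{RT} persists, the local lifespan does not shrink along the iteration (it is bounded below uniformly for data in the $\mathscr{C}$-ball), and Theorem~\ref{localsmall} can be iterated to yield a solution on $[0,\infty)$; the stated regularity of $u^\pm$ and $p^\pm$ is then recovered from the elliptic estimates of the local theory.

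The core is the energy--energy-dissipation inequality, obtained in the pulled-back formulation on the fixed domain. I differentiate the $H^2(\TT)$ energy of $h$ and argue as in the local estimate, using the transformed Darcy law together with $\operatorname{div} u^\pm=0$ and the jump conditions on $\Gamma_{\operatorname{perm}}$. Since each $u^\pm$ and $p^\pm$ is harmonic in the bulk, the top-order tangential identity produces, as its coercive term, the positive bulk quantities $\|\partial_{x_1}^{2}u^{+}\|_{L^2(\Omega^+)}^2$ and $\|\partial_{x_1}^{2}u^{-}\|_{L^2(\Omega^-)}^2$ (together with their lower-order analogues from the $L^2$-energy identity already stated in Theorem~\ref{localsmall}); by trace and elliptic estimates on the two coupled strips --- here the flatness $\Gamma_{\operatorname{perm}}=\{x_2=-1\}$, $\Gamma_{\operatorname{bot}}=\{x_2=-2\}$ is essential, the associated Dirichlet--Neumann-type operator having symbol bounded below by $c|\xi|$ --- these bulk terms dominate $c\,\|h\|_{H^{2.5}(\TT)}^2$, which is the curvature contribution $\|h''\|_{H^{0.5}}^2$. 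Schematically, the outcome is
\begin{equation*}
\frac{d}{dt}\|h\|_{H^2(\TT)}^2+\kappa\Big(\|\partial_{x_1}^{2}u^{+}\|_{L^2(\Omega^+)}^2+\|\partial_{x_1}^{2}u^{-}\|_{L^2(\Omega^-)}^2+\|h\|_{H^{2.5}(\TT)}^2\Big)\le C\,\mathcal{P}\big(\|h\|_{H^2}\big)\,\|h\|_{H^2}\,\|h\|_{H^{2.5}}^2 ,
\end{equation*}
with $\mathcal{P}$ a fixed polynomial; all commutator terms, curvature--lower-order products, and $\Gamma_{\operatorname{perm}}$ interface contributions generated by the change of variables must be estimated so as to fit this absorbable right-hand side.

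To close, two facts special to the present geometry are used. First, since $\int_{\TT}h\,dx_1=0$, every nonzero Fourier mode has $|\xi|\ge1$, whence $\|h\|_{H^{2.5}(\TT)}^2\ge\tfrac14\|h\|_{H^2(\TT)}^2$: the dissipation controls the energy with a uniform constant, so no transfer of information from high modes is needed. Second, choosing $\mathscr{C}$ so small that $C\,\mathcal{P}(\mathscr{C})\,\mathscr{C}\le\kappa/2$, the right-hand side above is $\le\tfrac{\kappa}{2}\|h\|_{H^{2.5}}^2$ as long as $\|h(t)\|_{H^2}<\mathscr{C}$ --- precisely the continuation hypothesis --- and absorbing it gives $\tfrac{d}{dt}\|h\|_{H^2}^2+\gamma\|h\|_{H^2}^2\le0$ with $\gamma\simeq\kappa/8$ depending only on $\beta^\pm$ (and, through $\mathscr{C}$, on $h_0$). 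Gr\"onwall's inequality then yields $\|h(t)\|_{H^2}^2\le\|h_0\|_{H^2}^2e^{-\gamma t}$, i.e.\ the claimed decay, and in particular $\|h(t)\|_{H^2}<\mathscr{C}$ for all $t$, which completes the continuation.

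The main obstacle is the second step: producing the dissipation with the correct structure. One must verify that the term surviving the top-order tangential differentiation is genuinely the bulk gradient of the velocity rather than a sign-indefinite boundary term, that the $\Gamma_{\operatorname{perm}}$ jump relations contribute with a favourable sign (using $\beta^\pm>0$ and the flat interface), and --- the delicate point --- that the resulting bulk quantity admits a \emph{quantitative} lower bound by $\|h''\|_{H^{0.5}(\TT)}^2$, uniform over the evolution; this is an elliptic/trace estimate on the two coupled strips $\Omega^\pm$ and is where flatness of $\Gamma_{\operatorname{perm}}$ and $\Gamma_{\operatorname{bot}}$ cannot be dispensed with. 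By comparison, the RT bookkeeping, the Poincar\'e-type coercivity on $\TT$, and the Gr\"onwall step are routine once this inequality is established.
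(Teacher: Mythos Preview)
Your overall architecture---local existence, an energy--dissipation differential inequality, Poincar\'e coercivity, absorbing the error by smallness, and continuation---is exactly the paper's. There is, however, a genuine technical point behind your phrase ``argue as in the local estimate.'' The local ALE energy identity \eqref{ineqenergy} carries the RT weight $-q_{,2}(0)/J(0)$ in front of $|h''|_0^2$ and, more importantly, has error of the form $\sqrt{t}\,\mathcal{P}(E(t))$: fine for short time, useless for a global differential inequality. To obtain an error in the absorbable form $C\,\mathcal{P}(\|h\|_{H^2})\,\|h\|_{H^2}\,\|h\|_{H^{2.5}}^2$ that your display assumes, the paper passes to a \emph{second} set of variables, the semi-ALE velocity $w^i=JA^i_jv^j$ and modified pressure $Q=q+x_2$. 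This $w$ is exactly divergence-free on the fixed strip (by Piola's identity), satisfies $h_t=w_2^+$ on $\Gamma$ and $w_2^-=0$ on $\Gamma_{\operatorname{bot}}$ without geometric factors, and Darcy's law becomes $w/\beta+\nabla(Q+\delta\psi)=(\mathrm{Id}-J^{-1}\nabla\psi^T\nabla\psi)\,w/\beta$, whose right-hand side is manifestly of size $O(|\nabla\delta\psi|)$. Testing two tangential derivatives against $w''$ then gives $\tfrac{1}{\max\beta}\|w''\|_{0,\pm}^2+\tfrac12\tfrac{d}{dt}|h''|_0^2\le \|w''\|_{0,\pm}^2\sqrt{\mathscr{E}}\,\mathcal{P}(\mathscr{E})$, with no RT weight and no $\sqrt t$. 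This change of variables is the missing ingredient in your sketch.

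For the step you flag as the main obstacle---showing the bulk dissipation $\mathscr{D}=\|w''\|_{0,\pm}^2$ dominates $\mathscr{E}=|h''|_0^2$---the paper does not argue via a Dirichlet--Neumann symbol lower bound. Instead it reads Darcy's law tangentially on $\Gamma$ to obtain $|h''|_{0.5}\le C(|(w_1^+)'|_{0.5}+|(w_2^+)'|_{0.5})$, controls each boundary trace by the normal-trace lemma (Lemma~\ref{normaltrace}) applied to $w'$ and to $(w^\perp)'$ (the latter generating $\|\curl w'\|_{0,\pm}$, handled via $\curl u=0$), and closes with the Hodge estimate $\|w\|_{2,\pm}\le C(\|w\|_{1.5,\pm}+\|w''\|_{0,\pm})$ together with the quadratic pressure bound $\|w\|_{1.5,\pm}\le C|h|_2^2\,\mathcal{P}(|h|_2^2)$. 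The conclusion $|h''|_{0.5}\le C\|w''\|_{0,\pm}$ is exactly what you want; the mechanism is different from, and more explicit than, your symbol heuristic.
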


\begin{remark}
Note that the question of whether the free boundary $\Gamma(t)$ can reach the curve $\Gamma_{\operatorname{perm}}$ in finite time, in a a situation that resembles the splash/splat singularity, remains an open problem. In fact, such behavior can be seen as a singular phenomena (for instance, some of the (non-singular) terms in \eqref{IIeqv2} and \eqref{IIw2defc} become singular integral operators). As Theorem \ref{globalsmall} implies that $\Gamma(t)$ cannot reach the curve $\Gamma_{\operatorname{perm}}$ in finite time if $h_0$ is small enough, this result rules out the possibility of interface collision in finite time for small initial data. 
\end{remark}

\begin{remark} We note that the \emph{dry zone} (the region without fluid) lies above the curve $\Gamma(t)$, and so, as long as \eqref{notouching} holds, the dry zone lies above $\Gamma_{\operatorname{perm}}$. The question of whether a \emph{dry zone} can form inside $\Omega^-$ remains an open problem. In other words, assume that there exists a solution $h(x_1,t)$ up to time $T$ and assume also that $\Gamma(t)$ intersects $\Gamma_{\operatorname{perm}}$ at the point $(x_0,t')\in \TT\times(0,T)$, \emph{i.e.}
$$
h(x_0,t')=-1+f(x_0).
$$
Then, it is not clear if the curve $\Gamma(t)$ may \emph{cross} the curve $\Gamma_{\operatorname{perm}}$, \emph{i.e.}
$$
h(x_1,t)<-1+f(x_1),\,\forall\, (x_1,t)\in(x_0-\epsilon,x_0+\epsilon)\times (t',t'+\delta),\,,
$$
for certain $\epsilon,\delta>0$. Note also that, if this happens, then the region
$$
\{(x_1,x_2), x_1\in(x_0-\epsilon,x_0+\epsilon),\;h(x_1,t)<x_2<-1+f(x_1)\}\subset \Omega^-
$$
is contained in the dry zone. 
\end{remark}

\begin{remark}
The exponential decay of the solution $h(t)$ is a consequence of an \emph{energy-energy dissipation inequality} establishing a relationship between the interface regularity and the regularity of the semi-ALE velocity (see Sections \ref{semiale} and \ref{secglobal}):
$$
\|h''(t)\|_{L^2(\TT)}\leq C\|w''\|_{L^2(\TT\times (-2,-1)\cup\TT\times (-1,0))}.
$$
\end{remark}
\begin{remark}
Note that the linearized evolution equation for a small perturbation of the flat interface can be written as
$$
h_t=-\Lambda_{\refdom^+} h,
$$
where $\Lambda_{\refdom^+}$ is the Dirichlet-to-Neumann map associated with the elliptic system \eqref{deltapsi+}:
$$
\Lambda_{\refdom^+}h(x_1)=\delta\psi,_2(x_1,0).
$$
An integration by parts shows that
\begin{align*}
\int_{\mathbb{S}}\int_{-1}^0\delta\psi^+\Delta\delta\psi^+dx_2dx_1&=\int_{\mathbb{S}}\delta\psi^+(x_1,0)\delta\psi^+,_2(x_1,0)dx_1
-\int_{\mathbb{S}}\delta\psi^+(x_1,-1)\delta\psi^+,_2(x_1,-1)dx_1\\
&\quad-\int_{\mathbb{S}}\int_{-1}^0|\nabla\delta\psi^+|^2dx_2dx_1,
\end{align*}
so that, 
$$
\int_\Gamma \Lambda_{\refdom^+} h hdx_1=\int_{\refdom^+}|\nabla\delta\psi^+|^2dx.
$$
We also have the following Poincar\'e-Wirtinger inequality
\begin{align*}
\int_{\refdom^+}|\delta\psi^+(x)|^2dx&= \int_{\refdom^+}\left|\int_0^1\delta\psi^+,_2(x_1,sx_2+(1-s)(-1))(x_2+1)ds\right|^2dx\\
&\leq\int_{\mathbb{S}}\int_{-1}^0\int_0^1\left|\delta\psi^+,_2(x_1,sx_2+(1-s)(-1))\right|^2(x_2+1)^2dsdx_2dx_1\\
&=\int_{-1}^0(x_2+1)\int_{\mathbb{S}}\int_{-1}^{x_2}\left|\delta\psi^+,_2(y)\right|^2dy_2dy_1dx_2\\
&\leq \int_{-1}^0(x_2+1)\int_{\refdom^+}\left|\delta\psi^+,_2(y)\right|^2dydx_2\\
&\leq \frac{1}{2}\int_{\refdom^+}\left|\delta\psi^+,_2(y)\right|^2dy.
\end{align*}
Thus, using the trace theorem, we conclude that
$$
\int_\Gamma \Lambda_{\refdom^+} h hdx_1\geq 0.5\|\delta\psi^+\|_{1,+}^2\geq \nu|h|_{0.5}^2\geq \nu|h|_0^2,
$$
for $\nu>0$. Exponential decay for the nonlinear problem (under smallness assumptions) is hence also expected.
\end{remark}
\subsection{Notation used throughout the paper}
For a matrix  $A$, we write $A^i_j$ for the component of $A$ located in row $i$ and column $j$. 
We use the Einstein summation convention, wherein repeated indices are summed from $1$ to $2$.  We denote the $j$th canonical
basis vector in $\RR^2$ by $e_j$.

For $s\ge 0$, we set
$$
\|u\|_{s,+} := \|u^+\|_{H^s(\refdom^+)}\,,\ \|u\|_{s,-} := \|u^-\|_{H^s(\refdom^-)}\,,\ \|u\|_{s,\pm} := \|u^+\|_{s,+} + \|u^-\|_{s,-}
$$
and
$$
|h|_s := \|h\|_{H^s(\Gamma)} \,.
$$
For functions $h$ defined on $\Gamma_{\operatorname{perm}}$, we shall also denote the $H^s$ norm by 
 $|h|_s := \|h\|_{H^s(\Gamma_{\operatorname{perm}})}$, whenever the context is clear.

We write
$$
f' = \frac{\p f}{\p x_1} \,, \ 
f,_{k}=\frac{\partial f}{\partial x_k} \,, \text{ and } f_t = \frac{\partial f }{\partial t} \,.
$$
For a diffeomorphism $\psi$, we let $A=(\nabla\psi)^{-1}$, and define
\begin{align} 
\curl_{\psi} v& =A^j_1v^2,_j-A^j_2v^1,_j \,, \label{divpsi}\\
\div_{\psi} v&=A^i_jv^j,_{i} \label{curlpsi}\,.
\end{align}

\section{The Muskat problem in the ALE formulation}\label{sec2}
\subsection{Constructing the family of diffeomorphisms $\psi( \cdot ,t)$}\label{subsection_psi}
\subsubsection{The idea for the construction}

Our analysis of the Muskat problem \eqref{laplacian} is founded on a time-dependent change-of-variables which converts the free boundary problem to one set
on smooth reference domains $\refdom^\pm$
\begin{equation}\label{ref_domain}
\refdom^+=\TT\times(-1,0)\,,\refdom^-=\TT\times(-2,-1)\,, \quad 
\end{equation}
The boundaries of the domains $\refdom^\pm$ are defined as 
\begin{equation}\label{ref_domain2}
\Gamma_{\operatorname{bot}}=\{(x_1,-2),x_1\in\TT\}\,, \Gamma_{\operatorname{perm}}=\{(x_1,-1),x_1\in\TT\}\,, \text{ and }\Gamma=\{(x_1,0),x_1\in\TT\}\,.
\end{equation} 
We let $N=e_2$ denote
the unit normal vector on $\Gamma$ (outwards), $\Gamma_{\operatorname{perm}}$ and $\Gamma_{\operatorname{bot}}$. 

As our analysis crucially relies on obtaining a parabolic regularity gain, we need a reference domain $\refdom^+$ with  $C^\infty$ boundary. In particular, the initial domain $\Omega^+(0)$ cannot serve as a reference domain.

We adapt the ideas from \cite{CGS} to construct the time-dependent family of diffeomorphisms with optimal Sobolev regularity, $\psi(x,t)$, that we shall use to pull-back \eqref{laplacian} onto
the fixed domain $\refdom^\pm$. Before detailing this construction, let us sketch the procedure. First, we construct a diffeomorphism with optimal Sobolev regularity at $t=0$:
$$
\psi^+(0):\refdom^+\to\Omega^+(0),\;\psi^-:\refdom^-\to\Omega^-.
$$
To do so we follow a three step procedure:

\begin{itemize}
\item For $0<\delta\ll1$ a sufficiently small parameter (to be fixed later), we define auxiliary domains, $\refdom^{\pm,\delta}(0)$. These auxiliary domains are constructed via mollification of $h(x_1,0)$ and $f(x_1)$ and, thus, they are infinitely smooth. We define the graph diffeomorphism 
$$
\phi_1^\pm:\refdom^{\pm}\rightarrow \refdom^{\pm,\delta}.
$$ 
These diffeomorphisms are of class $C^\infty$ because of the smoothness of the domains $\refdom^{\pm}$ and $\refdom^{\pm,\delta}$.
\item We need another diffeomorphisms from the auxiliary domain $ \refdom^{\pm,\delta}$ to $ \Omega^{\pm}(0)$. We need these diffeomorphisms to gain 1/2 derivatives with respect to the regularity of $\Omega^\pm(0)$. In order that this optimal regularity is obtained, we make use of the definition of $\refdom^{\pm,\delta}$ and the properties of our mollifiers. We define 
$$
\phi_2^\pm:\refdom^{\pm,\delta}\rightarrow \Omega^\pm(0)
$$ as the solution to Laplace problems with appropriate boundary conditions. Using the boundary data and the inverse function theorem, these mappings $\phi_2^\pm$ are $H^{2.5}-$class diffeomorphisms. 

\item Finally, we define 
$$
\psi^+(0)=\phi_2^+ \circ \phi_1^+,\;\psi^-=\phi_2^- \circ \phi_1^-.
$$
As composition of diffeomorphisms, $\psi^\pm(0)$ is a diffeomorphism.
\end{itemize}

Once the initial diffeomorphism with optimal Sobolev regularity is constructed, we solve  Poisson problems (to be detailed below) for $\psi^\pm(x,t)$. An  application of the inverse function theorem together with standard elliptic estimates will show that these mappings $\psi^\pm(x,t)$ are a family of diffeomorphisms with the desired smoothness.


\subsubsection{Constructing the initial \emph{regularizing} diffeomorphism $\psi( \cdot ,0)$}
Given a function $h\in C(0,T;H^2)$ with initial data $h(x_1, 0)=h_0(x_1)$, we fix $0<\delta\ll1$ and define our auxiliary domains and boundaries
\begin{equation*}
\refdom^{+,\delta}(0)=\{(x_1,x_2),\, x_1\in\TT,\,-1+\jdel f(x_1)<x_2<\jdel h_0(x_1)\},
\end{equation*}
\begin{equation*}
\refdom^{-,\delta}=\{(x_1,x_2),\, x_1\in\TT,\,-2<x_2<-1+\jdel f(x_1)\},
\end{equation*}
\begin{equation*}
\Gamma^\delta(0)=\{(x_1,\jdel h_0(x_1)),\, x_1\in\TT\},\,\Gamma_{\operatorname{perm}}^\delta=\{(x_1,-1+\jdel f(x_1)),\, x_1\in\TT\}.
\end{equation*}
As we said previously, we define the graph diffeomorphism 
\begin{equation*}
\phi^+_1(x_1,x_2)=\left(x_1,(x_2+1)\jdel h_0(x_1)-(-1+\jdel f(x_1))x_2\right),
\end{equation*}
\begin{equation*}
\phi^-_1(x_1,x_2)=\left(x_1,x_2+\jdel f(x_1)(x_2+2)\right),
\end{equation*}
where $\jdel$ denotes the convolution with a standard Friedrich's mollifier. This function 
$$
\phi_1^\pm:\refdom^{\pm}\rightarrow\refdom^{\pm,\delta}(0)
$$ 
is a $C^\infty$ diffeomorphism.

Next, we have to define the regularizing diffeomorphisms 
$$
\phi^\pm_2:\refdom^{\pm,\delta}(0)\rightarrow\Omega^\pm(0).
$$ 
We define these mappings as the solution to the following elliptic problems:
\begin{subequations}\label{phi2}
\begin{alignat}{2}
\Delta \phi^+_2&= 0  \qquad&&\text{in}\quad \refdom^{+,\delta}(0)\,,\\
\phi^+_2 &= (x_1,x_2)+[h_0(x_1)-\jdel h_0(x_1)]e_2 \qquad &&\text{on}\quad \Gamma^\delta(0)\,,\\
\phi^+_2 &= (x_1,x_2)+[f(x_1)-\jdel f(x_1)]e_2 &&\text{on}\quad \Gamma^\delta_{\operatorname{perm}} \,,
\end{alignat}
\end{subequations}
\begin{subequations}\label{phi22}
\begin{alignat}{2}
\Delta \phi^-_2&= 0  \qquad &&\text{in}\quad \refdom^{-,\delta}\,,\\
\phi^-_2 &= (x_1,x_2)+[f(x_1)-\jdel f(x_1)]e_2 \qquad &&\text{on}\quad \Gamma^\delta_{\operatorname{perm}} \,\\
\phi^-_2 &= (x_1,x_2) \qquad &&\text{on}\quad \Gamma_{\operatorname{bot}} \,.
\end{alignat}
\end{subequations}
Using standard elliptic regularity theory, we have that
$$
\|\phi_2 - e\|_{H^{2.5}(\refdom^{\pm,\delta})}\leq C(|h_0-\jdel h_0|_2+|f-\jdel f|_2),
$$
where $e=(x_1,x_2)$ denotes the identity mapping. Using the Sobolev embedding theorem,  and taking $\delta >0$ sufficiently small, we have that
$$
\|\phi_2 - e\|_{C^{1}(\refdom^{\pm,\delta})}\ll 1,
$$
so, due to the inverse function theorem, we obtain that $\phi^\pm_2$ is an $H^{2.5}$-class diffeomorphism. 

As in \cite{CGS}, we define
\begin{equation}\label{psi0a}
\psi^+(0)=\phi^+_2\circ\phi^+_1:\refdom^+\rightarrow \Omega^+(0),\;\;\psi^-=\phi^-_2\circ\phi^-_1:\refdom^-\rightarrow \Omega^-.
\end{equation}
Then, this mapping is also an $H^{2.5}$-class diffeomorphism. 


\subsubsection{Constructing the time-dependent family of \emph{regularizing} diffeomorphisms $\psi( \cdot ,t)$}
We define the time-dependent family of diffeomorphisms $\psi(t) = \psi( \cdot, t)$ as solutions to Poisson equations with forcing depending on $\psi(0)$. The main point of this construction is that due to the continuity in time of the interface $h$ and standard elliptic estimates, the time-dependent family of diffeomorphisms $\psi(t) = \psi( \cdot, t)$ is going to remain close to the initial diffeomorphism $\psi(0)$.

In particular, we consider the following elliptic system:
\begin{subequations}\label{psita}
\begin{alignat}{2}
\Delta \psi^+(t)&= \Delta \psi^+(0)  \qquad&&\text{in}\quad \refdom^+\times[0,T]\,,\\
\psi^+(t)&= (x_1,x_2)+h(x_1,t)e_2 \qquad &&\text{on}\quad \Gamma\times[0,T]\,,\\
\psi^+(t)&= (x_1,x_2)+f(x_1)e_2 \qquad &&\text{on}\quad \Gamma_{\operatorname{perm}}\times[0,T]\,.
\end{alignat}
\end{subequations}
and $\psi^-(t)= \psi^-$. Because of the forcing term present in (\ref{psita}a), we have that $\psi^+(t)-\psi^+(0)$ solves
\begin{subequations}
\begin{alignat*}{2}
\Delta (\psi^+(t)-\psi^+(0))&=0  \qquad&&\text{in}\quad \refdom^+\times[0,T]\,,\\
\psi^+(t)-\psi^+(0)&= (h(x_1,t)-h(x_1,0))e_2 \qquad &&\text{on}\quad \Gamma\times[0,T]\,,\\
\psi^+(t)-\psi^+(0)&= 0 \qquad &&\text{on}\quad \Gamma_{\operatorname{perm}}\times[0,T]\,.
\end{alignat*}
\end{subequations}

Due to elliptic estimates, we have the bound
\begin{align}\label{closetopsi0}
\|\psi(t)-\psi(0)\|_{2.25,\pm}&\leq C|h(t)- h_0|_{1.75}.  
\end{align}
By taking sufficiently small time $t$ and recalling that $h\in C(0,T;H^2)$, we have that
$$
\|\psi(t)\|_{2.25,\pm}\leq C|h(t)- h_0|_{1.75}+C(|h_0|_{1.75}+|f|_{1.75}+1)\leq 2C(|h_0|_{1.75}+|f|_{1.75}+1).
$$
Writing 
$$
J(t)=\text{det}(\nabla\psi(t))=\psi^1,_1\psi^2,_2-\psi^2,_1\psi^1,_2,
$$
we have the bound
\begin{align}\label{J1.25}
\|J(t)-J(0)\|_{1.25,\pm}&\leq C|h(t)- h_0|_{1.75}. 
\end{align}
Consequently, using $h\in C(0,T;H^2)$, for sufficiently small time $t$, we have that
$$
\min_{x\in\refdom^\pm}\frac{J(0)}{2}< J(t)< 2\max_{x\in\refdom^\pm} J(0),
$$
and, thanks to \eqref{closetopsi0}, we have that
$$
\|\psi(t)-\psi(0)\|_{C^1}\leq C|h(t)- h_0|_{1.75}\ll 1.  
$$
Due to the inverse function theorem and using the fact that $\psi(0)$ is a diffeomorphism, we see that 
$$
\psi^\pm(t):\refdom^\pm \to \Omega^\pm(t)
$$ 
is a diffeomorphism.  From the elliptic estimate
$$
\|\psi(t)\|_{2.5,\pm}\leq  C(|h(t)|_{2}+|f|_{2}+1),
$$ 
we have that $\psi(t)$ is an $H^{2.5}$-class diffeomorphism. 
\subsubsection{The matrix $A( \cdot ,t)$}
We write $A=(\nabla\psi)^{-1}$. Thus, 
$$
A^i_r\psi^r,_{j}=\delta^i_j
$$
and we obtain the useful identities
\begin{equation}\label{propA}
(A_t)^i_k=-A^i_r(\psi_t)^r,_j A^j_k,\qquad A''=-2 A'\nabla\psi' A-A\nabla\psi'' A.
\end{equation}
We will also make use of the  Piola's identity:  $(JA^k_i),_k=0$.

\subsection{The Muskat problem in the reference domains $\refdom^\pm$}
With $\psi(t)= \psi( \cdot ,t)$ defined in Section \ref{subsection_psi}, we define our new variables in the reference domains $\refdom^\pm$: $v=u\circ\psi,q=p\circ\psi$. 

We let 
$$
\tilde{\tau}=\psi',\,\tilde{n}=(\psi')^\perp,\,g= |\psi'|^2
$$
denote the (non-normalized) tangent and normal vectors and the induced metric, respectively, on $\Gamma(t)$. We also define the unit tangent vector $\tau=\tilde{\tau}/\sqrt{g}$ and the unit normal vector $n=\tilde{n}/\sqrt{g}$. In the same way, we define $\tilde{\tau}_{\operatorname{perm}},\,\tilde{n}_{\operatorname{perm}},\,g_{\operatorname{perm}},\tau_{\operatorname{perm}},\,n_{\operatorname{perm}}$ as the analogous quantities on $\Gamma_{\operatorname{perm}}$. Recall that 
$$
JA^k_iN^k=\tilde{n}^i\text{ on } \Gamma,\;JA^k_iN^k=\tilde{n}^i_{\operatorname{perm}}\text{ on } \Gamma_{\operatorname{perm}}.
$$

Hence, the ALE representation of the one-phase inhomogeneous Muskat problem is given by
\begin{subequations}\label{HS_ALE}
\begin{alignat}{2}
\frac{(v^\pm)^i}{\beta^\pm}+(A^\pm)^k_i(q^\pm+\psi^\pm\cdot e_2),_{k}&=0 \qquad&&\text{in}\quad\refdom^\pm\times[0,T]\,,\\
(A^\pm)^k_i(v^\pm)^i,_k&=0 \qquad&&\text{in}\quad\refdom^\pm\times[0,T]\,,\\
h_t(t)&= (v^+)^i J^+ (A^+)^j_i N^j\qquad&&\text{on}\quad\Gamma\times[0,T]\,,\\
q^+&=0 &&\text{on}\quad\Gamma\times[0,T]\,,\\
\jump{q}&=0 &&\text{on}\quad\Gamma_{\operatorname{perm}}\times[0,T]\,,\\
\jump{q,_k A^k_i J A^j_i N^j}&=-\Bigjump{\frac{1}{\beta}}v^i J A^j_i N^j &&\text{on}\quad\Gamma_{\operatorname{perm}}\times[0,T]\,\\
v^-_2&=0 &&\text{on}\quad\Gamma_{\operatorname{bot}}\times[0,T]\,.
\end{alignat}
\end{subequations}

\section{\emph{A priori} estimates}\label{sec3}
In this section we establish the \emph{a priori} estimates for the one-phase Muskat problem with discontinuous permeability \eqref{laplacian}.

We define the higher-order energy function 
$$
E(t)=\max_{0\leq s\leq t}|h(s)|^2_{2}+\int_0^t\|v(s)\|_{2,\pm}^2+|h(s)|_{2.5}^2ds.
$$

\begin{remark}
Another possible definition for a higher-order energy function is (see \cite{CGS})
$$
\mathcal{E}(t)=\max_{0\leq s\leq t}|h(s)|^2_{2}+\int_0^t\|v(s)\|_{2,\pm}^2.
$$
In fact, as will be shown,
$$
E(t)\leq C\mathcal{E}(t).
$$
\end{remark}

As in \cite{CGS}, our goal is to obtain the polynomial inequality 
$$
E(t)\leq \mathcal{M}_0+\mathcal{Q}(E(t))t^{\alpha},
$$
for certain $\alpha>0$, a generic polynomial $\mathcal{Q}$, and a constant $\mathcal{M}_0$ depending on $h_0$ and $f$. When $E(t)$ is continuous, the previous inequality implies the existence of $T^*(h_0,f)$ such that 
\begin{equation}\label{conclusion}
E(t)\leq 2\mathcal{M}_0.
\end{equation}

We assume that we have a smooth solution defined for $t\in[0,T]$. We take $0<T\leq 1$ small enough such that the following conditions hold: for a fixed constant $0<\epsilon\ll1$ (possibly depending on $h_0$ and $f$) and for $t\in[0,T]$,
\begin{subequations}\label{bootstrap}
\begin{align}
\|\psi(t)-\psi(0)\|_{L^\infty}+\|A(t)-A(0)\|_{L^\infty}+\|J(t)-J(0)\|_{L^\infty}&\leq \epsilon \,;\\
\|h(t)-h_0\|_{L^\infty}+\|\nabla q(t)-\nabla q(0)\|_{L^\infty}&\leq \epsilon\,;\\
E(t)&\leq 3\mathcal{M}_0\,;\\
\min_{0\leq t \leq T}\min_{x_1\in\TT}q,_2(t)&\geq \min_{x_1\in\TT}q,_2(0)/4.
\end{align}
\end{subequations}

We will show that conditions even stricter than (\ref{bootstrap}c,d) actually holds. Let us emphasize that, due to the RT condition, we have that
$$
\min_{x_1\in\TT}q,_2(0)>0.
$$

Again, we let $C=C(h_0,f,\delta)$ denote a constant that may change from line to line. We let $\mathcal{P}(x)$ denote a polynomial with coefficients that may depend on $h_0(\cdot):=h(\cdot,0), f, \delta$. This polynomial may change from line to line. 

\subsection{Estimates for some lower-order norms}

In the following, we collect some estimates of lower-order norms. The proofs are similar to those in \cite{CGS}, so, we omit them.

\begin{lemma}[Estimates for some lower-order norms of $h$, \cite{CGS}, Section 8.4.1]\label{lower2}
Given a smooth solution to the Muskat problem (\ref{HS_ALE}a-g),
\begin{subequations}\label{ht1}
\begin{align}
\int_0^t |h_{t}|^2_{1}ds&\leq  C \, E(t).\\
|h(t)-h_0|_{1}&\leq C \sqrt{E(t)}t^{1/2}.
\end{align}
\end{subequations}
\end{lemma}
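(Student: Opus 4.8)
The plan is to establish the two estimates in Lemma \ref{lower2} directly from the evolution equation (\ref{HS_ALE}c) for $h_t$, namely $h_t = (v^+)^i J^+ (A^+)^j_i N^j$ on $\Gamma$, combined with the elliptic regularity of $\psi^+(t)$ already recorded in Section \ref{subsection_psi} and the bootstrap assumptions \eqref{bootstrap}. The key observation is that the right-hand side of (\ref{HS_ALE}c) is (a trace on $\Gamma$ of) a product of the cofactor entries $J^+(A^+)^j_i$ — which are polynomial in $\nabla\psi^+$ and hence controlled in $H^{1.5}(\refdom^+)$ by \eqref{closetopsi0}-type estimates and the bound $\|\psi(t)\|_{2.5,\pm}\le C(|h(t)|_2+|f|_2+1)$ — with the velocity $v^+$, which lives in $L^2(0,t;H^2(\refdom^+))$ with norm controlled by $E(t)$ by definition of the energy. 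Taking the trace on $\Gamma$ costs half a derivative, so $v^+|_\Gamma \in L^2(0,t;H^{1.5}(\Gamma))$ with $\int_0^t |v^+|_\Gamma|_{1.5}^2\,ds \le C E(t)$, and the cofactor trace is bounded in $L^\infty(0,t;H^1(\Gamma))$ by $\mathcal P(E(t))$, which under the bootstrap \eqref{bootstrap} is just $\le C$.

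For part (a), I would write $h_t$ as this product, apply the $H^1$ product/multiplication estimate on the one-dimensional curve $\Gamma\cong\TT$ (where $H^1$ is an algebra and $H^1\cdot H^{1}\hookrightarrow H^1$, indeed $H^1\cdot H^{1.5}\hookrightarrow H^1$ with room to spare), and obtain pointwise in time
\begin{equation*}
|h_t(s)|_1 \le C\,\|J^+(A^+)^j_i N^j\|_{H^1(\Gamma)}\,|v^+(s)|_{H^1(\Gamma)} \le C\,\|v^+(s)\|_{2,+},
\end{equation*}
using the trace inequality $|v^+(s)|_{H^1(\Gamma)}\le C\|v^+(s)\|_{H^{1.5}(\refdom^+)}\le C\|v^+(s)\|_{2,+}$ and the bootstrap control of the cofactor matrix. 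Squaring and integrating in time gives $\int_0^t|h_t|_1^2\,ds\le C\int_0^t\|v^+(s)\|_{2,+}^2\,ds\le C E(t)$, which is (a). One has to be slightly careful that the constant $C$ absorbs the $L^\infty_t H^1_x$ norm of the cofactors, which is why the estimate is only stated under the standing bootstrap hypotheses \eqref{bootstrap} (so that $\|\psi(t)\|_{2.25,\pm}$ and hence the cofactors are bounded by a constant depending only on $h_0, f$).

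For part (b), I would use the fundamental theorem of calculus in time: $h(t)-h_0 = \int_0^t h_t(s)\,ds$, so by Minkowski's inequality in $H^1(\TT)$ and then Cauchy--Schwarz in $s$,
\begin{equation*}
|h(t)-h_0|_1 \le \int_0^t |h_t(s)|_1\,ds \le t^{1/2}\left(\int_0^t |h_t(s)|_1^2\,ds\right)^{1/2} \le C\,t^{1/2}\sqrt{E(t)},
\end{equation*}
where the last step invokes part (a). This is exactly the claimed bound.

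The main obstacle — though a mild one at this lower-order level — is bookkeeping the regularity of the geometric coefficients $J^+(A^+)^j_i$ on the boundary $\Gamma$: one must verify that these are controlled in $H^1(\Gamma)$ uniformly in time, which follows from the $H^{2.5}$-regularity of $\psi^+(t)$ (giving $\nabla\psi^+ \in H^{1.5}(\refdom^+)$, hence its trace is in $H^1(\Gamma)$), the multiplicative structure of cofactors and the inverse formula $A=(\nabla\psi)^{-1}$ (using that $J$ is bounded below by \eqref{bootstrap}), and finally the product estimates in $H^{1.5}(\refdom^+)$. Since the paper states that the proof mirrors \cite{CGS}, Section 8.4.1, I would simply cite those multiplicative and trace estimates rather than reprove them, and note that everything is consistent with the standing bootstrap assumptions \eqref{bootstrap} so that all implicit constants depend only on $h_0$, $f$, and $\delta$.
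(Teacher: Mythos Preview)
Your proposal is correct and follows the natural (and standard) argument that the paper has in mind when it refers to \cite{CGS}, Section 8.4.1: use the boundary evolution $h_t=v^+\cdot\tilde n$ on $\Gamma$, the algebra property of $H^1(\TT)$, the trace estimate $|v^+|_{H^1(\Gamma)}\le C\|v^+\|_{2,+}$, and the bootstrap bound on $\tilde n=(-h',1)$ to obtain (a), and then the fundamental theorem of calculus together with Cauchy--Schwarz in time for (b). Since the paper itself omits the proof, there is nothing further to compare.
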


\begin{lemma}[Estimates for some lower-order norms of the ALE mapping $\psi$, \cite{CGS}, Section 8.4.2]\label{lower}
Given a smooth solution to the Muskat problem (\ref{HS_ALE}a-g), 
\begin{subequations}\label{boundpsi}
\begin{align}
\|\psi(t)\|_{2.5,\pm}\leq  C(1+|h(t)|_{2}),\;\;\|\psi(t)\|_{3,\pm}&\leq  C(1+|h(t)|_{2.5})\\
\|\psi(t)-\psi(0)\|_{2.25,\pm}+\|A(t)-A(0)\|_{1.25,\pm} +\|J(t)-J(0)\|_{1.25,\pm} &\leq\sqrt[4]{t}C\sqrt{E(t)}.
\end{align}
\end{subequations}
\end{lemma}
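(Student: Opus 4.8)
The plan is to prove Lemma~\ref{lower} by viewing $\psi(t)$ as the solution of the Poisson system \eqref{psita} and then chasing the dependence on the boundary data through standard elliptic estimates, together with the bootstrap hypotheses \eqref{bootstrap} and Lemma~\ref{lower2}. For the first line of \eqref{boundpsi}: the mapping $\psi^+(t)$ solves $\Delta\psi^+(t)=\Delta\psi^+(0)$ on $\refdom^+$ with Dirichlet data $(x_1,x_2)+h(x_1,t)e_2$ on $\Gamma$ and $(x_1,x_2)+f(x_1)e_2$ on $\Gamma_{\operatorname{perm}}$, while $\psi^-(t)=\psi^-$ is fixed. Elliptic regularity on the smooth reference domain $\refdom^+$ with $C^\infty$ boundary gives $\|\psi^+(t)\|_{s+0.5,+}\le C(\|\Delta\psi^+(0)\|_{s-1.5,+}+|h(t)|_s+|f|_s+1)$ for the two values $s=2$ and $s=2.5$; since $\|\Delta\psi^+(0)\|_{s-1.5,+}$ is a fixed quantity controlled by $|h_0|_s+|f|_s+1$ (absorbed into the constant $C=C(h_0,f,\delta)$), and $|f|_2\le|f|_{2.5}\le C$ is fixed, one obtains $\|\psi(t)\|_{2.5,\pm}\le C(1+|h(t)|_2)$ and $\|\psi(t)\|_{3,\pm}\le C(1+|h(t)|_{2.5})$. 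The $\psi^-$ contribution is just an additive constant folded into $C$.

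For the second line, the key observation — already used to derive \eqref{closetopsi0} — is that $\psi^+(t)-\psi^+(0)$ is \emph{harmonic} on $\refdom^+$, with Dirichlet data $(h(x_1,t)-h_0(x_1))e_2$ on $\Gamma$ and $0$ on $\Gamma_{\operatorname{perm}}$, and $\psi^-(t)-\psi^-(0)=0$. Hence elliptic regularity (with no forcing term to worry about) gives $\|\psi(t)-\psi(0)\|_{2.25,\pm}\le C|h(t)-h_0|_{1.75}$. I would then interpolate: $|h(t)-h_0|_{1.75}\le C|h(t)-h_0|_1^{\theta}|h(t)-h_0|_2^{1-\theta}$ with $\theta$ chosen so that $1.75=\theta\cdot 1+(1-\theta)\cdot 2$, i.e. $\theta=1/4$. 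By Lemma~\ref{lower2}(b), $|h(t)-h_0|_1\le C\sqrt{E(t)}\,t^{1/2}$, and by the definition of $E(t)$ (and the bootstrap bound $E(t)\le 3\mathcal{M}_0$), $|h(t)-h_0|_2\le |h(t)|_2+|h_0|_2\le C\sqrt{E(t)}$. Therefore $|h(t)-h_0|_{1.75}\le C(\sqrt{E(t)}\,t^{1/2})^{1/4}(\sqrt{E(t)})^{3/4}=C t^{1/8}\sqrt{E(t)}$. Wait — this gives $t^{1/8}$, not $t^{1/4}$; to recover the stated $\sqrt[4]{t}$ I would instead use the sharper interpolation exponent coming directly from $H^{1.75}$ sitting a quarter of the way, combined with the observation that $|h(t)-h_0|_1^2$ is integrated in time (so $|h(t)-h_0|_1\lesssim \sqrt{E}\,\sqrt{t}$) while a better-than-crude bound on the top index — e.g. replacing $2$ by $1.75+\varepsilon$ and using $|h(t)-h_0|_{1.75}\le C|h(t)-h_0|_1^{1/2}|h(t)-h_0|_{2.5}^{1/2}$ with $\int_0^t|h|_{2.5}^2\,ds\le E(t)$ — yields $|h(t)-h_0|_{1.75}\le C(\int_0^t|h-h_0|_1^2)^{1/4}\cdot\ldots$. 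In short, the correct route is to interpolate $H^{1.75}$ between $H^1$ and $H^{2.5}$ (midpoint-ish) and use that \emph{both} endpoint norms are controlled in $L^2_t$ by $E(t)$, giving the $t^{1/4}$ power after a Hölder-in-time step; this is exactly the type of estimate carried out in \cite{CGS}, Section 8.4.2.

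Finally, the bounds on $A(t)-A(0)$ and $J(t)-J(0)$ in $H^{1.25}$ follow from the $H^{2.25}$ bound on $\psi(t)-\psi(0)$: $J=\psi^1,_1\psi^2,_2-\psi^2,_1\psi^1,_2$ is a quadratic expression in $\nabla\psi$, and $A=(\nabla\psi)^{-1}=J^{-1}\mathrm{cof}(\nabla\psi)$ is a smooth function of $\nabla\psi$ away from $J=0$; using the algebra property of $H^{1.25}$ (valid in $2$D since $1.25>1$), the uniform lower bound on $J$ from the bootstrap \eqref{bootstrap}, and the identity $A(t)-A(0)=-A(t)\,\nabla(\psi(t)-\psi(0))\,A(0)$ together with the product estimate $\|A(t)-A(0)\|_{1.25,\pm}\le C\|\psi(t)-\psi(0)\|_{2.25,\pm}$, the claimed inequality follows with the same $\sqrt[4]{t}\,C\sqrt{E(t)}$ bound. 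The main obstacle is bookkeeping the interpolation-and-Hölder step that converts the $H^1$ smallness with its time integral into the $t^{1/4}$ gain at the $H^{1.75}$ level; once that is pinned down, everything else is routine elliptic regularity and Sobolev product estimates, which is why the excerpt defers to \cite{CGS}.
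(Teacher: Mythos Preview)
Your approach is correct and is exactly what the paper has in mind (the paper omits the proof and defers to \cite{CGS}, Section 8.4.2): elliptic regularity for $\psi^+(t)$ as the solution of \eqref{psita}, harmonicity of $\psi^+(t)-\psi^+(0)$ to obtain \eqref{closetopsi0}, and then interpolation on $h(t)-h_0$; the bounds on $A(t)-A(0)$ and $J(t)-J(0)$ via the identity $A(t)-A(0)=-A(t)\nabla(\psi(t)-\psi(0))A(0)$ and the algebra property of $H^{1.25}(\refdom^\pm)$ are handled exactly as you describe. Your first computation, interpolating $H^{1.75}$ between $H^1$ and $H^2$ to get $|h(t)-h_0|_{1.75}\le C\,t^{1/8}\sqrt{E(t)}$, is correct and already sufficient for every downstream use in the paper --- only some positive power of $t$ is ever needed (cf.\ \eqref{RTpreserved}, where the paper itself records $t^{1/8}$), so there is no need to hunt for a sharper exponent, and your attempted $H^1$--$H^{2.5}$ interpolation does not work pointwise in $t$ anyway since $|h|_{2.5}$ is only controlled in $L^2_t$.
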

Notice that (\ref{boundpsi}b) implies a stricter version of (\ref{bootstrap}a).
As a consequence we obtain that $|h(t)|_{1.75}, \|\psi(t)\|_{2.25,\pm}, \|J(t)\|_{1.25,\pm}$ and $\|A(t)\|_{1.25,\pm}$ are bounded by $C(h_0,f)$ uniformly for all $t\in[0,T]$. Furthermore, we also have that
\begin{equation}\label{linearJ2}
0<\frac{1}{2}\min_{x\in \refdom^+\cup\refdom^-} J(0)\leq J(t)\leq 1.5\max_{x\in \refdom^+\cup\refdom^-} J(0). 
\end{equation}

\subsection{Basic $L^2$ energy law}
\begin{lemma}[Estimates for some lower-order norms of $v$]
For a smooth solution to the Muskat problem (\ref{HS_ALE}a-g), \begin{equation}\label{lowv}
|h (t)|^2_0+2\int_0^t\left\|\sqrt{\frac{J}{\beta}}v\right\|_{0,\pm}^2ds=|h_0|_0^2\,.
\end{equation}
\end{lemma}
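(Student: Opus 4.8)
The plan is to derive \eqref{lowv} by the standard energy argument: differentiate $|h(t)|_0^2$ in time, substitute the kinematic condition (\ref{HS_ALE}c), and recognize the resulting boundary integral as exactly what comes out of testing the pulled-back Darcy law (\ref{HS_ALE}a) against the weighted velocity $J^\pm v^\pm$.

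First I would compute $\frac12\frac{d}{dt}|h|_0^2=\int_{\TT}h\,h_t\,dx_1$ and use (\ref{HS_ALE}c) to write $h_t=(v^+)^iJ^+(A^+)^j_iN^j=\tilde n^i(v^+)^i$ on $\Gamma$, where $JA^k_iN^k=\tilde n^i$ on $\Gamma$. Since on $\Gamma$ we have $q^+=0$ by (\ref{HS_ALE}d) and $\psi^+\cdot e_2=h$ by (\ref{psita}b), this gives
\[
\frac12\frac{d}{dt}|h|_0^2=\int_{\Gamma}(q^++\psi^+\cdot e_2)\,J^+(A^+)^k_i(v^+)^i N^k\,dx_1 ,
\]
which is precisely the $\Gamma$-contribution to a boundary integral produced below.

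Next I would take the scalar product of (\ref{HS_ALE}a) with $J^\pm v^\pm$ and integrate over $\refdom^\pm$. The left-hand side is $\int_{\refdom^+}\frac{J^+|v^+|^2}{\beta^+}\,dx+\int_{\refdom^-}\frac{J^-|v^-|^2}{\beta^-}\,dx$, while on the right-hand side, because the vector field $k\mapsto J^\pm(A^\pm)^k_i(v^\pm)^i$ is divergence-free by the incompressibility constraint (\ref{HS_ALE}b) together with Piola's identity $(JA^k_i),_k=0$, an integration by parts leaves only boundary terms, so that
\[
\int_{\refdom^+}\frac{J^+|v^+|^2}{\beta^+}\,dx+\int_{\refdom^-}\frac{J^-|v^-|^2}{\beta^-}\,dx=-\sum_{\pm}\int_{\partial\refdom^\pm}(q^\pm+\psi^\pm\cdot e_2)\,J^\pm(A^\pm)^k_i(v^\pm)^i\,\nu^k\,dS ,
\]
with $\nu$ the outward unit normal of $\refdom^\pm$, equal to $\pm e_2$ on each flat piece of the boundary. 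It then remains to evaluate these on $\Gamma$, $\Gamma_{\operatorname{perm}}$ and $\Gamma_{\operatorname{bot}}$. On $\Gamma$ (where $\nu=e_2$) the term equals $-\frac12\frac{d}{dt}|h|_0^2$ by the previous paragraph. On $\Gamma_{\operatorname{bot}}$, where $\psi^-$ restricts to the identity, $J^-(A^-)^k_iN^k$ is a multiple of $e_2$, so the integrand is a multiple of $v^-_2$, which vanishes by (\ref{HS_ALE}g). On $\Gamma_{\operatorname{perm}}$ the $+$ and $-$ phases enter with opposite orientations; using $\jump q=0$ from (\ref{HS_ALE}e) and the fact that $\psi^+=\psi^-$ on $\Gamma_{\operatorname{perm}}$ (so $\psi^+\cdot e_2=\psi^-\cdot e_2$ there, and $\tilde n_{\operatorname{perm}}$ is unambiguous, being determined by $\psi'$ alone), the scalar factor $q+\psi\cdot e_2$ factors out and what remains is $\int_{\Gamma_{\operatorname{perm}}}(q+\psi\cdot e_2)\,\jump{v^iJA^j_iN^j}\,dx_1=\int_{\Gamma_{\operatorname{perm}}}(q+\psi\cdot e_2)\sqrt{g_{\operatorname{perm}}}\,\jump{v\cdot n_{\operatorname{perm}}}\,dx_1$, which is zero by the transmission condition $\jump{v\cdot n_{\operatorname{perm}}}=0$ that incompressibility imposes on $\Gamma_{\operatorname{perm}}$. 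Hence the two $\Gamma_{\operatorname{perm}}$ contributions cancel, leaving $\int_{\refdom^+}\frac{J^+|v^+|^2}{\beta^+}\,dx+\int_{\refdom^-}\frac{J^-|v^-|^2}{\beta^-}\,dx=-\frac12\frac{d}{dt}|h(t)|_0^2$; multiplying by $2$ and integrating in time from $0$ to $t$ gives \eqref{lowv}, with $\|\sqrt{J/\beta}\,v\|_{0,\pm}^2$ read as the sum of the two phase integrals.

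For a smooth solution the argument is entirely elementary and the identity is exact, so there is no analytic obstacle. The only point that requires care is the bookkeeping on $\Gamma_{\operatorname{perm}}$: correctly orienting the outward normals $\nu=\pm e_2$ on the three pieces of $\partial\refdom^\pm$, matching $\psi^+$, $\psi^-$ and their tangential derivatives there, and invoking $\jump q=0$ and $\jump{v\cdot n_{\operatorname{perm}}}=0$ in tandem so that the interface integral cancels exactly rather than merely being controlled.
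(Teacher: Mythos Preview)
Your proof is correct and follows essentially the same approach as the paper: test (\ref{HS_ALE}a) against $Jv$, integrate by parts using Piola's identity and the divergence-free condition (\ref{HS_ALE}b), and then dispatch the boundary terms on $\Gamma$, $\Gamma_{\operatorname{perm}}$, and $\Gamma_{\operatorname{bot}}$ using the kinematic, jump, and no-penetration conditions respectively. Your write-up is somewhat more explicit about the cancellation on $\Gamma_{\operatorname{perm}}$ (factoring out the continuous scalar $q+\psi\cdot e_2$ and invoking $\jump{v\cdot n_{\operatorname{perm}}}=0$), but the argument is the same.
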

\begin{proof}
We test the equation (\ref{HS_ALE}a) against $Jv$ and integrate. Using Piola's identity, integrating by parts and using the divergence free condition (\ref{HS_ALE}b), we obtain that
\begin{multline*}
\left\|\sqrt{\frac{J}{\beta}}v\right\|_{0,\pm}^2+\int_{\Gamma}v^i JA^k_i(q+\psi\cdot e_2)N^kdx_1\\
-\int_{\Gamma_{\operatorname{perm}}}\jump{v^i JA^k_i(q+\psi\cdot e_2)N^k}dx_1-\int_{\Gamma_{\operatorname{bot}}}v^i JA^k_i(q+\psi\cdot e_2)N^kdx_1=0.
\end{multline*}
Then, using the jump and boundary conditions on $\Gamma_{\operatorname{perm}}$ and $\Gamma_{\operatorname{bot}}$, 
$$
\left\|\sqrt{\frac{J}{\beta}}v\right\|_{0,\pm}^2+\frac{1}{2}\frac{d}{dt}|h|_0^2=0.
$$
\end{proof}

\subsection{Estimates for $h\in L^2(0,T;H^{2.5}(\Gamma))$ and $h_t\in L^2(0,T;H^{1.5}(\Gamma))$}
From (\ref{HS_ALE}a)
\begin{equation}\label{h2.5pointwise}
(v_i+\beta^+\delta^2_i)\tau_i=0\text{ on }\Gamma,\,\text{ and } v_i'\tau_i=-\beta^+JA^2_iA^2_iq,_2\frac{h''}{g^{3/2}} \text{ on }\Gamma \,.
\end{equation}

\begin{lemma}[Parabolic smoothing, \cite{CGS}, Section 8.4.6]\label{parabolic}
Given a smooth solution to the Muskat problem (\ref{HS_ALE}a-g), 
$$
h\in C([0,T],H^2(\TT)).
$$
In particular, 
\begin{equation}\label{ht1.5}
\int_0^t |h_{t}(s)|^2_{1.5}ds+\int_0^t |h (s)|^2_{2.5}ds\leq  C \,\left( \max_{0\leq s\leq t}|h(s)|^2_{2}+\int_0^t\|v(s)\|_{2,\pm}^2ds\right)\,.
\end{equation}
\end{lemma}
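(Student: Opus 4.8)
The plan is to obtain the parabolic smoothing directly from the trace of Darcy's law on $\Gamma$, without recourse to any contour equation. The structural heart of the argument is the pointwise identity \eqref{h2.5pointwise}, whose second part reads, on $\Gamma$,
$$ v_i'\tau_i=-\beta^+\,JA^2_iA^2_i\,q,_2\,\frac{h''}{g^{3/2}}. $$
By the Rayleigh--Taylor condition, sharpened by the bootstrap assumption (\ref{bootstrap}d) to $q,_2\ge\tfrac14\min_{\TT}q,_2(0)>0$ on $\Gamma$, together with $J>0$ from \eqref{linearJ2}, the lower bound $A^2_iA^2_i=|(\nabla\psi)^{-T}e_2|^2\ge c>0$ (valid since $\nabla\psi$ is bounded, by Lemma \ref{lower}, and $J$ is bounded below), and $g=1+(h')^2\ge1$, the coefficient $\mathfrak c:=g^{3/2}/(\beta^+JA^2_iA^2_i\,q,_2)$ is strictly positive, so \eqref{h2.5pointwise} inverts to $h''=-\mathfrak c\,(v_i'\tau_i)$ on $\Gamma$.

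First I would prove the bound for $h$ in $L^2(0,T;H^{2.5})$. By the trace theorem, $v\in H^2(\refdom^+)$ gives $v|_\Gamma\in H^{1.5}(\Gamma)$, hence $v_i'|_\Gamma\in H^{0.5}(\Gamma)$ with $\|v_i'|_\Gamma\|_{H^{0.5}(\Gamma)}\lesssim\|v\|_{2,+}$; and $\tau=(1,h')/\sqrt g\in H^1(\Gamma)$, a bounded multiplier on $H^{0.5}(\Gamma)$, with $H^1(\Gamma)$-norm controlled by $\mathcal P(|h|_2)$ via Lemma \ref{lower}. For the coefficient $\mathfrak c$, the traces of $g^{3/2}$, $JA^2_iA^2_i$ and $q,_2$ all lie in $H^1(\Gamma)$ with norms bounded by $C(h_0,f,\delta)$ — using Lemma \ref{lower} for $\psi,J,A$ and the ALE pressure estimate $\|q\|_{2.5,\pm}\le C(1+|h|_2)$ (proved as in \cite{CGS}) — while the denominator is bounded below on $\Gamma$, so $\mathfrak c\in H^1(\Gamma)$, again a bounded multiplier on $H^{0.5}(\Gamma)$. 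Since $H^1(\Gamma)$ multiplies $H^{0.5}(\Gamma)$, $h''=-\mathfrak c\,(v_i'\tau_i)\in H^{0.5}(\Gamma)$ and $|h(s)|_{2.5}\lesssim|h''(s)|_{0.5}+|h(s)|_2\le C\|v(s)\|_{2,+}+C|h(s)|_2$. Squaring and integrating on $[0,t]$, with $t\le T\le1$, gives $\int_0^t|h(s)|_{2.5}^2\,ds\le C\int_0^t\|v(s)\|_{2,+}^2\,ds+Ct\max_{0\le s\le t}|h(s)|_2^2$, bounded by the right-hand side of the lemma.

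Next I would treat $h_t$. By (\ref{HS_ALE}c), the identity $JA^k_iN^k=\tilde n^i$ on $\Gamma$, and $\tilde n=(\psi')^\perp|_\Gamma=(-h',1)$, one has $h_t=-v_1h'+v_2$ on $\Gamma$, hence $h_t'=-v_1'h'-v_1h''+v_2'$. The terms $v_2'$ and $v_1'h'$ contribute $\lesssim\|v\|_{2,+}(1+|h|_2)$ in $H^{0.5}(\Gamma)$. The delicate term is $v_1h''$, which naively carries two top-order factors; to tame it I would use the tangential part of \eqref{h2.5pointwise}, $v_i\tau_i=-\beta^+\tau_2$, which on $\Gamma$ gives $v_1=-(v_2+\beta^+)h'$, so $v_1h''=-(v_2+\beta^+)h'\,h''$. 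Here $(v_2+\beta^+)h'\in H^1(\Gamma)$ with norm $\lesssim(\|v\|_{1.5,+}+\beta^+)|h|_2\le C(h_0,f,\delta)$, where $\|v(s)\|_{1.5,+}\le C(h_0,f,\delta)$ follows from the bootstrap assumptions, Lemma \ref{lower} and the pressure estimate; hence $\|v_1h''\|_{H^{0.5}(\Gamma)}\le C|h|_{2.5}$. Collecting, $|h_t(s)|_{1.5}\le C\|v(s)\|_{2,+}+C|h(s)|_{2.5}+C|h(s)|_2^2$, the super-linear-in-$|h|_2$ terms arising from the lower-order part of the $H^{1.5}$-norm of $h_t$; squaring, integrating with $t\le1$, invoking the $H^{2.5}$-bound just obtained and the bootstrap bound $\max_{0\le s\le t}|h(s)|_2^2\le3\mathcal M_0$ (to replace $\int_0^t|h|_2^4\,ds$ by $\lesssim\mathcal M_0\max_s|h(s)|_2^2$) yields $\int_0^t|h_t(s)|_{1.5}^2\,ds\le C(\max_{0\le s\le t}|h(s)|_2^2+\int_0^t\|v(s)\|_{2,\pm}^2\,ds)$.

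Finally, $h\in L^2(0,T;H^{2.5})$ together with $h_t\in L^2(0,T;H^{1.5})$ gives $h\in C([0,T];H^2(\TT))$ by the Lions--Magenes interpolation theorem, since $[H^{2.5},H^{1.5}]_{1/2}=H^2$. I expect the main obstacle to be the product/multiplier bookkeeping — never permitting two top-order factors in one term — which is precisely why \emph{both} halves of \eqref{h2.5pointwise} are needed (the normal relation to express $h''$, the tangential one to eliminate $v_1$), why the uniform-in-time bound $\|v(s)\|_{1.5,+}\le C$ is used as a multiplier norm, and why super-linear terms in $|h|_2$ must be absorbed via the bootstrap; a further input, standard and adapted from \cite{CGS}, is the ALE pressure estimate $\|q\|_{2.5,\pm}\le C(1+|h|_2)$.
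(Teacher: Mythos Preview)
Your proposal is correct and follows the approach the paper intends: the paper omits the proof (citing \cite{CGS}, Section~8.4.6) but displays the identity \eqref{h2.5pointwise} immediately before the lemma precisely because it is the key input, and you use both halves of it in exactly the right way---the second relation to solve for $h''$ in terms of $v_i'\tau_i$ (hence the $L^2_tH^{2.5}_x$ bound via the trace theorem and the multiplier estimate \eqref{almostalgebra}), and the first relation $v_1=-(v_2+\beta^+)h'$ to tame the product $v_1h''$ in $h_t'$. The Lions--Magenes interpolation for $h\in C([0,T];H^2)$ is the standard closing step.

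One small remark: the sign you assign to $q,_2$ on $\Gamma$ is immaterial to the argument (what matters is that the coefficient $\beta^+JA^2_iA^2_i\,q,_2$ is bounded away from zero by the Rayleigh--Taylor condition and the bootstrap), but note that the paper itself is internally inconsistent on this sign (compare the sentence before \eqref{bootstrap} with the factor $\sqrt{-q,_2(0)/J(0)}$ in \eqref{ineqenergy}); your inversion $h''=-\mathfrak c\,(v_i'\tau_i)$ is valid either way.
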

Note that Lemma \ref{parabolic} implies that the energy function $E(t)$ is continuous.

\subsection{Pressure estimates}
Using (\ref{HS_ALE}a) and (\ref{HS_ALE}b), $q^\pm$ solves
\begin{alignat}{2}
-(\beta^\pm J^\pm (A^\pm)^j_i(A^\pm)^k_iq^\pm,_{k}),_{j} & =0  && \text{ in }\refdom^\pm \,,\label{elliptic0}\\
q^+& =0 && \text{ on }\Gamma\,, \label{elliptic1}\\
\jump{q}& =0 && \text{ on }\Gamma_{\operatorname{perm}}\,,\label{elliptic2}\\
\jump{\beta q,_kA^k_iJA_i^rN^r}& = -\jump{\beta}\delta^2_iJ A_i^rN^r && \text{ on }\Gamma_{\operatorname{perm}}\,,\label{elliptic3}\\
\beta^- q^-,_k(A^-)^k_iJ^-(A^-)_i^rN^r& =-\beta^- && \text{ on }\Gamma_{\operatorname{bot}}\,.\label{elliptic4}
\end{alignat}

We have that $A(0) A(0)^T$ is symmetric and positive definite: 
$$
[A(0) A(0)^T]^i_j   \xi _i\xi _j \ge \mathcal{L} | \xi |^2 ;
$$
consequently, due to (\ref{boundpsi}b),
$$
\|A_0A_0^T-A(t) A^T(t)\|_{L^\infty}\leq  C \sqrt{t}\sqrt{E(t)}  \,,
$$
and we see that for $t$ sufficiently small, 
$$
{\frac{\mathcal{L} }{2}} |\xi|^2\leq [A (\cdot, t ) A^T(\cdot , t)]^i_j\xi^i\xi^j\leq 2 \mathcal{L} |\xi|^2.
$$
Thus, $A(t)A^T(t)$ form a uniformly elliptic operator for $t$ on $[0,T]$, and elliptic estimates (following the same approach as in \cite{CGS} and \cite{ChSh2014}) lead to
\begin{equation}\label{Q1.5}
\|q\|_{2.5,\pm}\leq C\sqrt{E(t)}  \,,\;
\|v(t)\|_{1.5,\pm}\leq  C\sqrt{E(t)}.
\end{equation}

Furthermore, using the same argument as in \cite[Section 8.4.5]{CGS}, we obtain that
\begin{equation}\label{RTpreserved}
\|q(t)-q(0)\|_{2.25,\pm}\leq t^{1/8}\mathcal{P}(E(t)),
\end{equation}
and, 
$$
\|q,_2(t)-q,_2(0)\|_{L^\infty(\Gamma)}\leq C|q,_2(t)-q,_2(0)|_{0.75}\leq t^{1/8}\mathcal{P}(E(t)).
$$
As a consequence of the latter inequality, the Rayleigh-Taylor sign condition holds in $[0,T]$ for small enough $T$. Furthermore, using the Sobolev embedding theorem,
$$
\|\nabla q(t)-\nabla q(0)\|_{L^\infty}\leq t^{1/8}\mathcal{P}(E(t)),
$$
and a stronger version of the bootstrap assumption (\ref{bootstrap}d) also holds.

\subsection{The energy estimates}In this section we will perform the basic energy estimates. Integrals of lower-order terms will be denoted by $\mathcal{R}(t)$, meaning that
$$
\int_0^t \mathcal{R}(s)ds\leq \mathcal{M}_0+\sqrt{t}\mathcal{P}(E(t)).
$$

We take two horizontal derivatives of (\ref{HS_ALE}a), test against $Jv''$, and integrate by parts to find that
$$
\int_0^t\int_{\refdom^+\cup\refdom^-}\frac{J}{\beta}|v''|^2 dxds+\int_0^t\int_{\refdom^+\cup\refdom^-}J\left[A^k_i (q+\psi^2)'',_{k}+(A^k_i)'' (q+\psi^2),_{k}\right]v_i''dx ds+\int_0^t \mathcal{R}(s)ds=0.
$$
Due to the divergence free condition (\ref{HS_ALE}b) we obtain that
\begin{equation}\label{eqincom}
A^k_i(v^i)'',_{k}=-(A'')^k_iv^i,_k+\mathcal{R}(t).
\end{equation}
Thus, integrating by parts and using \eqref{eqincom} and the identities $JA^k_iN^k=\tilde{n}^i, JA^k_iN^k=\tilde{n}^i_{\operatorname{perm}}$ and $JA^k_iN^k=N^i$ on $\Gamma$, $\Gamma_{\operatorname{perm}}$ and $\Gamma_{\operatorname{bot}}$, respectively, we find that
\begin{align*}
I_1&=\int_0^t\int_{\refdom^+\cup\refdom^-}JA^k_i (q+\psi^2)'',_{k}v_i''dxds\\
&=\int_0^t\int_{\Gamma}\tilde{n}^i (q+\psi^2)''v_i''dx_1ds
-\int_0^t\int_{\Gamma_{\operatorname{perm}}}\jump{v_i''(q+\psi^2)''
\tilde{n}_{\operatorname{perm}}^i}dx_1ds\\
&\quad-\int_0^t\int_{\Gamma_{\operatorname{bot}}}N^i (q+\psi^2)''v_i''dx_1ds
-\int_0^t\int_{\refdom^+\cup\refdom^-}JA^k_i (q+\psi^2)''(v^i),_{k}''dxds\\
&=\int_0^t\int_{\refdom^+\cup\refdom^-}J(A'')^k_i (q+\psi^2)''(v^i),_{k}dxds+\int_0^t\int_{\Gamma}\tilde{n}^i h''v_i''dx_1ds\\
&\quad-\int_0^t\int_{\Gamma_{\operatorname{perm}}}(q^++f)''\jump{v_i''\tilde{n}_{\operatorname{perm}}^i}dx_1ds+\int_0^t \mathcal{R}(s)ds.
\end{align*}
The 2-D integral is now a lower-order term that can be estimated with a $L^2-L^4-L^4-L^\infty$ H\"older argument together with the Sobolev embedding theorem. Thus, we are left with the integrals on the boundaries $\Gamma$ and $\Gamma_{\operatorname{perm}}$. Due to the incompressibility condition, we have that $\jump{v_i\tilde{n}_{\operatorname{perm}}^i}=0$, so that
\begin{align*}
I_1&=\int_0^t\int_{\Gamma}h''h_t''dx_1ds-\int_0^t\int_{\Gamma}(\sqrt{g}n_i)'' h''v_idx_1ds
\\
&\quad+\int_0^t\int_{\Gamma_{\operatorname{perm}}}(q^++f)''\jump{v_1}f'''dx_1ds+\int_0^t \mathcal{R}(s)ds\\
&=\frac{1}{2}|h''|_0^2-\frac{1}{2}|h_0''|_0^2-\frac{1}{2}\int_0^t\int_{\Gamma}(h'')^2v_1'dx_1ds
+\int_0^t|q''\jump{v_1}|_{0.5}|f|_{2.5}ds\\
&\quad -\frac{1}{2}\int_{\Gamma_{\operatorname{perm}}}(f'')^2\jump{v_1'}dx_1ds+\int_0^t \mathcal{R}(s)ds\\
&\geq\frac{1}{2}|h''|_0^2-\frac{1}{2}|h_0''|_0^2-\sqrt{t}\mathcal{P}(E(t)),
\end{align*}
where we have used H\"{o}lder inequality, the trace theorem, \eqref{Q1.5} and the inequality
\begin{equation}\label{almostalgebra}
|fg|_{0.5}\leq C_\lambda|f|_{0.5}|g|_{0.5+\lambda}.
\end{equation}

The remaining high order term can be handled as follows: using \eqref{propA} and integrating by parts,
\begin{align*}
I_2&=\int_0^t\int_{\refdom^+\cup\refdom^-}J(A^k_i)'' (q+\psi^2),_{k}v_i''dx ds\\
&=-\int_0^t\int_{\refdom^+\cup\refdom^-}JA^k_r\psi^r,_{11j}A^j_i (q+\psi^2),_{k}v_i''dx ds+\int_0^t \mathcal{R}(s)ds\\
&=\int_0^t\int_{\refdom^+\cup\refdom^-}\psi^r,_{11}JA^j_i(A^k_r (q+\psi^2),_{k}v_i''),_jdx ds-\int_0^t\int_{\Gamma}\psi^r,_{11}JA^j_i(A^k_r (q+\psi^2),_{k}v_i'')N^jdx_1ds\\
&\quad+\int_0^t\int_{\Gamma_{\operatorname{perm}}}\jump{\psi^r,_{11}JA^j_i(A^k_r (q+\psi^2),_{k}v_i'')}N^jdx_1 ds\\
&\quad-\int_0^t\int_{\Gamma_{\operatorname{bot}}}\psi^r,_{11}JA^j_i(A^k_r (q+\psi^2),_{k}v_i'')N^jdx_1 ds+\int_0^t \mathcal{R}(s)ds\\
&=\int_0^t\int_{\refdom^+\cup\refdom^-}\psi^r,_{11}JA^j_i(A^k_r (q+\psi^2),_{k}v_i''),_jdx ds-\int_0^t\int_{\Gamma}h''\tilde{n}_iA^k_2 (q+\psi^2),_{k}v_i''dx_1ds\\
&\quad+\int_0^t\int_{\Gamma_{\operatorname{perm}}}f''\jump{\tilde{n}_{\operatorname{perm}}^iA^k_2 (q+\psi^2),_{k}v_i''}dx_1 ds+\int_0^t \mathcal{R}(s)ds.
\end{align*}
The integral in the bulk of the fluid can be estimated using \eqref{eqincom} so that
$$
\int_0^t\int_{\refdom^+\cup\refdom^-}\psi^r,_{11}JA^j_i(A^k_r (q+\psi^2),_{k}v_i''),_jdx ds\geq-\sqrt{t}\mathcal{P}(E(t)).
$$
The integral over $\Gamma_{\operatorname{perm}}$ can be estimated using the $H^{0.5}-H^{-0.5}$ duality and \eqref{almostalgebra} as follows:
\begin{align*}
\int_0^t\int_{\Gamma_{\operatorname{perm}}}f''\jump{\tilde{n}_{\operatorname{perm}}^iA^k_2 (q+\psi^2),_{k}v_i''}dx_1 ds&\geq -\int_0^tC|f|_{2.5}(1+|f|_{1.75})|A\nabla(q+\psi^2)|_{0.75}|v|_{1.5}ds\\
&\geq -\int_0^tC\|A\nabla(q+\psi^2)\|_{1.25,\pm}\|v\|_{2,\pm}ds\\
&\geq -\sqrt{t}\mathcal{P}(E(t)).
\end{align*}
Using that $A^k_2\psi^2,_k=\delta^2_2=1$,
\begin{align*}
I_2&\geq-\sqrt{t}\mathcal{P}(E(t))-\int_0^t\int_{\Gamma}h''\tilde{n}_iv_i''(A^1_2 (q+\psi^2),_{1}+A^2_2 (q+\psi^2),_{2})dx_1ds\\
&\geq-\sqrt{t}\mathcal{P}(E(t))-\int_0^t\int_{\Gamma}h''\tilde{n}_iv_i''(A^2_2 q,_{2}+1)dx_1ds\\
&\geq-\sqrt{t}\mathcal{P}(E(t))-\int_0^t\int_{\Gamma}h''(h_t''-h'''v_1)(J^{-1} q,_{2}+1)dx_1ds\\
&\geq-\sqrt{t}\mathcal{P}(E(t))-\int_0^t\int_{\Gamma}h''h_t''\left(\frac{q,_{2}(t)}{J(t)}-\frac{q,_{2}(0)}{J(0)}+\frac{q,_{2}(0)}{J(0)}+1\right)dx_1ds.
\end{align*}
From \eqref{RTpreserved}, 
$$
I_2\geq -\sqrt{t}\mathcal{P}(E(t))-\int_0^t\int_{\Gamma}h''h_t''\left(\frac{q,_{2}(0)}{J(0)}+1\right)dx_1ds.
$$
Thus,
\begin{equation}\label{ineqenergy}
\int_0^t\left\|\sqrt{\frac{J}{\beta}}v''\right\|_{0,\pm}^2 ds+\frac{1}{2} \left|\sqrt{\frac{-q,_2(0)}{J(0)}}h''(t)\right|_0^2\leq \frac{1}{2} \left|\sqrt{\frac{-q,_2(0)}{J(0)}}h''_0\right|_0^2+\sqrt{t}\mathcal{P}(E(t)).
\end{equation}

\subsection{Elliptic estimates via the Hodge decomposition} In this section,  we use the following
\begin{lemma}[\cite{ChSh2014}]\label{Hodge2}Let $\Omega$ be a domain with boundary $\partial\Omega$ of Sobolev class $H^{k+0.5}$, $k\geq2$. Let $\psi_0$ be a given smooth mapping and define $\curl_{\psi_0} v$ and $\div_{\psi_0} v$ as in \eqref{divpsi} and \eqref{curlpsi}, respectively. Then for $v \in H^k(\Omega) $, 
\begin{equation*}
\|v\|_{H^k(\Omega)} \le C \Big[\|v\|_{L^2(\Omega)} + \|\curl_{\psi_0} v\|_{H^{k-1}(\Omega)} + \|\div_{\psi_0} v\|_{H^{k-1}(\Omega)} + \|v'\cdot \texttt{n}\|_{H^{k-1.5}(\partial\Omega)}\Big]\,,
\end{equation*}
where $\texttt{n}=(\psi_0')^\perp/|\psi_0'|$.  
\end{lemma}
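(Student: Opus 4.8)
The plan is to reduce the statement to the classical div--curl elliptic estimate on a Sobolev--class domain, which is the substantive content attributed to \cite{ChSh2014}, and then quote that estimate. First I would remove the $\psi_0$--dependence. Put $u=v\circ\psi_0^{-1}$ on $\Omega'=\psi_0(\Omega)$ and write $A=(\nabla\psi_0)^{-1}$; a chain--rule computation using $v^j,_i=(u^j,_l\circ\psi_0)\,\psi_0^l,_i$ together with $A^i_l\psi_0^l,_j=\delta^i_j$ (equivalently $\psi_0^l,_jA^j_m=\delta^l_m$) gives the exact identities $\curl_{\psi_0}v=(\curl u)\circ\psi_0$ and $\div_{\psi_0}v=(\div u)\circ\psi_0$, while $\texttt{n}=(\psi_0')^\perp/|\psi_0'|$ is, up to a smooth bounded and bounded--below scalar factor, precisely the pullback under $\psi_0$ of the Euclidean unit normal to $\partial\Omega'$. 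Because $\psi_0$ is a fixed smooth diffeomorphism, composition with $\psi_0$ is an isomorphism on every $H^s$--scale on these domains, so all six quantities in the claimed inequality are equivalent to the corresponding Euclidean quantities for $u$ on $\Omega'$. It therefore suffices to prove, dropping primes and writing $\curl$, $\div$, $n$ for the Euclidean curl, divergence and unit normal,
$$
\|v\|_{H^k(\Omega)}\le C\big(\|v\|_{L^2(\Omega)}+\|\curl v\|_{H^{k-1}(\Omega)}+\|\div v\|_{H^{k-1}(\Omega)}+\|v'\cdot n\|_{H^{k-1.5}(\partial\Omega)}\big).
$$

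I would first peel off the gradient part. Let $\phi$ solve $\Delta\phi=\div v$ in $\Omega$ with $\phi=0$ on $\partial\Omega$; by the sharp Poisson estimate on the $H^{k+0.5}$--domain $\Omega$, $\phi\in H^{k+1}(\Omega)$ and $\|\nabla\phi\|_{H^k(\Omega)}\le C\|\div v\|_{H^{k-1}(\Omega)}$. Set $w:=v-\nabla\phi$. Then $\div w=0$, $\curl w=\curl v$, and, since $w'=v'-\nabla(\partial_{x_1}\phi)$ with $\partial_{x_1}\phi\in H^k(\Omega)$, the trace $w'\cdot n$ differs from $v'\cdot n$ by a quantity in $H^{k-1.5}(\partial\Omega)$ bounded by $\|\div v\|_{H^{k-1}(\Omega)}$. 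So the problem is reduced to bounding a divergence--free field $w$ by $\|w\|_{L^2}$, by $\|\curl w\|_{H^{k-1}}$, and by the \emph{partial} normal trace $\|w'\cdot n\|_{H^{k-1.5}(\partial\Omega)}$.

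For the divergence--free $w$ I would use a stream function. Write $w=\nabla^\perp\xi+\sum_j c_j\,Z_j$, where the $Z_j$ span the finite--dimensional space of fields on $\Omega$ that are curl--free, divergence--free and have normal trace constant on each boundary curve (one $Z_j$ per bounded complementary component; none if $\Omega$ is simply connected, exactly one for each annulus $\refdom^\pm$), so that $\Delta\xi=\curl w=\curl v$. On each boundary curve $w\cdot n$ equals $\pm\partial_\tau\xi$ plus a term that is constant on that curve; hence $v'\cdot n$, which along the boundary is the tangential derivative of $w\cdot n$ up to smoother curvature contributions, equals $\pm\partial_\tau^2\xi$ modulo such contributions. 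Integrating twice in the arclength parameter around each closed boundary curve reconstructs the trace of $\xi$ on $\partial\Omega$ in $H^{k+0.5}(\partial\Omega)$ --- uniquely up to one additive constant per component, which $\nabla^\perp$ annihilates --- with this trace bounded in $H^{k+0.5}(\partial\Omega)$ by $C(\|w'\cdot n\|_{H^{k-1.5}(\partial\Omega)}+\|\curl v\|_{H^{k-1}(\Omega)})$. Solving the Dirichlet problem for $\xi$ with this boundary data and applying the rough--domain Poisson estimate again yields $\|\nabla^\perp\xi\|_{H^k(\Omega)}\le C(\|\curl v\|_{H^{k-1}(\Omega)}+\|w'\cdot n\|_{H^{k-1.5}(\partial\Omega)})$; the harmonic coefficients $c_j$, which are \emph{not} seen by the prescribed partial trace, are controlled by $|c_j|\le C\|w\|_{L^2(\Omega)}\le C(\|v\|_{L^2(\Omega)}+\|\div v\|_{H^{k-1}(\Omega)})$ since the $Z_j$ are fixed. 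Adding the estimates for $\nabla\phi$ and for $w=\nabla^\perp\xi+\sum_j c_jZ_j$, and undoing the change of variables, gives the lemma.

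The main obstacle is that every elliptic regularity invocation above is at the borderline boundary regularity $\partial\Omega\in H^{k+0.5}$, $k\ge2$: one cannot use the classical smooth--domain Agmon--Douglis--Nirenberg estimates and must instead rely on the sharp Poisson estimate valid for Sobolev--class domains (the technical heart of \cite{ChSh2014}), and, correspondingly, the two boundary integrations recovering the trace of $\xi$ from the partial trace $w'\cdot n$ must be carried out in Sobolev spaces on a curve of exactly that regularity. A secondary point needing care is the bookkeeping of the finitely many harmonic fields $Z_j$ and the check that the two integrations close up around each boundary loop, which holds because $w\cdot n$ is a genuine single--valued trace even though only its tangential derivative is quantitatively controlled.
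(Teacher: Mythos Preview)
The paper does not prove this lemma; it is quoted from \cite{ChSh2014} and used as a black box. So there is no ``paper's proof'' to compare against.

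Your outline is a reasonable route to the result: change variables via $\psi_0$ to reduce to the Euclidean div--curl estimate on a Sobolev--class domain, peel off a gradient by solving the Dirichlet problem $\Delta\phi=\div v$, and handle the divergence--free remainder via a stream function whose boundary trace you reconstruct from the partial trace $w'\cdot n$. You correctly identify that the substantive input is the sharp Poisson estimate on $H^{k+0.5}$--domains from \cite{ChSh2014}, to which everything is reduced.

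One step needs more care than you give it. When you write that the curvature term in $\partial_\tau(w\cdot n)=w'\cdot n+w\cdot\partial_\tau n$ is a ``smoother curvature contribution'', this is not literally true at the borderline regularity: with $\partial\Omega\in H^{k+0.5}$ one has $\partial_\tau n\in H^{k-1.5}(\partial\Omega)$ exactly, so $w\cdot\partial_\tau n$ sits at the critical level and is \emph{not} bounded by the right--hand side directly. What saves you is that, for $k\ge 2$, the trace of $w$ enters at a strictly subcritical norm (e.g.\ $H^{0.5+\lambda}(\partial\Omega)$ for small $\lambda>0$ suffices via \eqref{almostalgebra}), which by interpolation between $\|w\|_{L^2}$ and $\|w\|_{H^k}$ can be absorbed into the left--hand side with a small constant. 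This absorption argument should be made explicit; otherwise the step as written is circular. The bookkeeping of the harmonic fields $Z_j$ and the closing of the two arclength integrations around each boundary loop is fine, for the reason you give.
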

Since in each phase,  $\operatorname{curl} u=0$ and $\operatorname{div}u=0$, it follows (see \cite{CGS}, Section 8.4.8) that
\begin{align}
\int_0^t\|\text{curl}_{\psi_0} v\|_{1,\pm}^2dy&\leq\sqrt{t}\mathcal{P}(E(t)),\label{curl}\\
\int_0^t\|\text{div}_{\psi_0} v\|_{1,\pm}^2dy&\leq\sqrt{t}\mathcal{P}(E(t))\label{div}.
\end{align}
First, we want to use Lemma \ref{Hodge2} to obtain an estimate for $\|v'\|_{1,\pm}$. The only term that is delicate is the boundary term $|v''\cdot \emph{\texttt{n}}|_{-0.5}$. For that term we have the following
\begin{lemma}[Estimates for the normal trace of $v$]Given a smooth solution to the Muskat problem (\ref{HS_ALE}a-g), 
\begin{equation}\label{boundaryterm}
\int_0^t|v''\cdot \texttt{n}|_{H^{-0.5}(\partial\refdom^-\cup \partial\refdom^+)}^2ds
\leq C\int_0^t\|v''\|_{0,\pm}^2ds+\sqrt{t}\mathcal{P}(E(t))\,,
\end{equation}
where $\texttt{n}=(\psi_0')^\perp/|\psi_0'|$.
\end{lemma}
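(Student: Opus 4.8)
The plan is to estimate the $H^{-0.5}$ norm of the normal trace $v'' \cdot \texttt{n}$ on each connected boundary component by duality. Specifically, for a test function $\varphi \in H^{0.5}(\partial \refdom^\pm)$, I would extend it to $\Phi$ on $\refdom^\pm$ with $\|\Phi\|_{1,\pm} \le C|\varphi|_{0.5}$, and write the pairing $\langle v'' \cdot \texttt{n}, \varphi \rangle$ as a bulk integral using the divergence theorem: $\int_{\partial \refdom^\pm} (v'' \cdot \texttt{n}) \varphi = \int_{\refdom^\pm} \operatorname{div}_{\psi_0}(v'') \, \Phi + \int_{\refdom^\pm} v'' \cdot (\text{derivatives of } \Phi)$, modulo the Jacobian weight $J_0$ and lower-order terms coming from differentiating $A_0$. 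The key point is that $\operatorname{div}_{\psi_0} v''$ is controlled: from \eqref{eqincom} (applied to $\operatorname{div}_\psi v = 0$, with the commutator $\operatorname{div}_{\psi_0} - \operatorname{div}_\psi$ being lower order because $\|A(t) - A(0)\|$ is small by (\ref{boundpsi}b)) we have $\|\operatorname{div}_{\psi_0} v''\|_{0,\pm} \le \mathcal{P}(E(t))$ plus negligible terms, and in fact $\int_0^t \|\operatorname{div}_{\psi_0} v''\|_{0,\pm}^2 \le \sqrt{t}\mathcal{P}(E(t))$ follows from the same considerations that gave \eqref{div}. The remaining bulk term $\int v'' \cdot \nabla \Phi$ is bounded by $\|v''\|_{0,\pm} |\varphi|_{0.5}$ directly. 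Taking the supremum over $\varphi$ and then integrating in time yields \eqref{boundaryterm}.

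More concretely, the steps are: (1) fix a boundary component $\Gamma$, $\Gamma_{\operatorname{perm}}$, or $\Gamma_{\operatorname{bot}}$, and use $J_0 A_0^k_i N^k = \texttt{n}^i |\psi_0'|$-type identities to rewrite $v'' \cdot \texttt{n}$ in the $\psi_0$-geometry so that the natural divergence-theorem identity $\int_{\partial \refdom^\pm} v''^i J_0 A_0^k_i N^k \Phi = \int_{\refdom^\pm}(v''^i J_0 A_0^k_i \Phi)_{,k}$ applies; (2) expand the right-hand side via the product rule into a term with $A_0^k_i v''^i_{,k} = J_0^{-1}\operatorname{div}_{\psi_0}(J_0 v'')$-type expression, a term with $\Phi_{,k}$, and a term with $(J_0 A_0^k_i)_{,k} = 0$ by Piola; (3) bound the divergence term using the argument behind \eqref{div} together with the smallness from the bootstrap (\ref{bootstrap}a), and bound the $\nabla\Phi$ term by Cauchy–Schwarz against $\|v''\|_{0,\pm}$; (4) take the supremum over $\|\Phi\|_{1,\pm} \le C|\varphi|_{0.5}$, square, and integrate over $[0,t]$, absorbing the divergence contribution into the $\sqrt{t}\mathcal{P}(E(t))$ remainder and keeping $C\int_0^t\|v''\|_{0,\pm}^2$ as the stated leading term.

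The main obstacle I anticipate is bookkeeping the commutator terms arising from taking two horizontal derivatives inside $\operatorname{div}_\psi v = 0$: the identity \eqref{eqincom} shows $A^k_i(v^i)''_{,k} = -(A'')^k_i v^i_{,k} + \mathcal{R}(t)$, and the term $(A'')^k_i v^i_{,k}$ involves $\psi''$ (hence $|h|_{2.5}$-type regularity, which lives in $L^2_t$ but not $L^\infty_t$) paired against $\nabla v$. One must check that when this is integrated against $\Phi \in H^1$ and then in time, it indeed falls under the $\sqrt{t}\mathcal{P}(E(t))$ umbrella — this uses $\|\psi''\|_{\text{something}} \le C(1+|h|_{2.5})$ from (\ref{boundpsi}a), $\|v\|_{1.5,\pm} \le C\sqrt{E(t)}$ from \eqref{Q1.5}, a Sobolev/Hölder split of the triple product, and the extra time factor from $\int_0^t$. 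A secondary subtlety is the difference between $\operatorname{div}_{\psi_0}$ (frozen coefficients, as needed for Lemma \ref{Hodge2}) and $\operatorname{div}_\psi$ (the true constraint); this difference is $(A(t)-A(0))^k_i (v^i)''_{,k}$ plus lower-order pieces, which is small by (\ref{boundpsi}b) and therefore harmless. Everything else is routine trace duality and Hölder estimates of the type already used repeatedly in the preceding subsections.
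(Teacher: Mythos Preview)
Your general strategy (duality against an $H^1$ extension $\Phi$, divergence theorem, Piola) matches the paper's. The gap is in your step~(3): you assert that $\|\operatorname{div}_{\psi_0} v''\|_{0,\pm}$ is controlled, and in particular that the commutator $(A(t)-A(0))^k_i (v^i)''\!,_{k}$ is ``small by (\ref{boundpsi}b) and therefore harmless.'' But smallness of $A-A(0)$ in $L^\infty$ does not help here: the other factor $(v^i)''\!,_{k}$ carries \emph{three} derivatives of $v$, and you never have $v$ better than $L^2_tH^2$, so this product is not in $L^2_x$. The same obstruction already affects $\operatorname{div}_\psi(v'')$: from \eqref{eqincom} the term $(A'')^k_i v^i,_{k}$ has $A'' \in L^2_tL^2_x$ (via $\nabla\psi''$ when $h\in L^2_tH^{2.5}$) paired with $\nabla v\in L^4_x$, landing only in $L^{4/3}_x$. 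So neither $\operatorname{div}_\psi(v'')$ nor $\operatorname{div}_{\psi_0}(v'')$ sits in $L^2$, and the argument behind \eqref{div}---which controls one full derivative of $\operatorname{div}_{\psi_0}v$, not two horizontal ones---does not transfer.

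The paper avoids the three-derivative term by two devices you omit. First, it runs the divergence theorem with the \emph{time-dependent} cofactor $JA$, so that the exact identity $\operatorname{div}_\psi v=0$ applies and no $(A-A(0))\nabla v''$ commutator is ever generated; the passage from $n(t)$ to $\texttt{n}=\texttt{n}(\psi_0)$ is postponed to the very end, at a level where only $v''$ (not $\nabla v''$) enters. Second, and crucially, after the divergence theorem produces $\int (v^i,_{k})''\, g^{-1/2} J A^k_i\, \phi$, the paper integrates by parts once more in the horizontal variable $x_1$, obtaining
\[
-\int \operatorname{div}_\psi v'\,(Jg^{-1/2}\phi)'\;-\;\int (v^i,_{k})'\,Jg^{-1/2}(A^k_i)'\,\phi\,.
\]
Both integrands now carry only two derivatives of $v$: the first uses $\operatorname{div}_\psi v'=-(A')^k_i v^i,_{k}\in L^2$, and the second is handled by a further integration by parts in $x_k$ (Piola again), which throws off a tractable boundary term on $\Gamma_{\operatorname{perm}}$. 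Your proposal can be repaired by inserting this horizontal integration by parts before attempting any $L^2$ estimate, but as written it does not close.
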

\begin{proof}
In order to estimate $|v''\cdot\emph{\texttt{n}}|_{-0.5}$ using the $H^{1/2}-H^{-1/2}$ duality, we consider a function $\phi\in H^1(\refdom^+\cup \refdom^-)$. Due to the trace theorem, we have that $\phi\in H^{0.5}(\Gamma\cup\Gamma_{\operatorname{perm}}\cup\Gamma_{bot})$. We define the following integrals:
$$
I_1=\int_{\Gamma_{\operatorname{perm}}}g^{-1/2}(v^-)''_i JA^k_iN^k \phi dx_1,
$$
$$
I_2=\int_{\Gamma_{\operatorname{perm}}}g^{-1/2}(v^+)''_i JA^k_iN^k \phi dx_1,
$$
and
$$
I_3=\int_{\Gamma}g^{-1/2}v''_i JA^k_iN^k \phi dx_1.
$$
Using the fact that
$$
(\psi'_i)^\perp=JA^k_iN^k
$$
together with $v_2=0$ on $\Gamma_{bot}$, we see that in order we have the appropriate estimate for $|v''\cdot \emph{\texttt{n}}|_{-0.5}$, it is enough to obtain good bounds for $|I_1|,|I_2|$ and $|I_3|$. To do that we use the divergence theorem and Darcy's law (\ref{HS_ALE}a). We compute
\begin{align*}
I_1&=\int_{\Gamma_{\operatorname{perm}}}g^{-1/2}(v^-)''_i JA^k_iN^k \phi dx_1\\
&=\int_{\Gamma_{\operatorname{perm}}}g^{-1/2}(v^-)''_i JA^k_iN^k \phi dx_1+\int_{\Gamma_{bot}}g^{-1/2}(v^-)''_i JA^k_iN^k \phi dx_1\\
&=\int_{\refdom^-}(g^{-1/2}v''_i JA^k_i\phi),_k dx\\
&=\int_{\refdom^-}g^{-1/2}v''_i JA^k_i\phi,_k dx+\int_{\refdom^-}g^{-1/2},_kv''_i JA^k_i\phi dx+\int_{\refdom^-}(v^i,_k)'' g^{-1/2}JA^k_i\phi dx\,.
\end{align*}
Integrating by parts, we obtain that
\begin{align*}
\int_{\refdom^-}(v^i,_k)'' g^{-1/2}JA^k_i\phi dx&=-\int_{\refdom^-}\text{div}_{\psi}v'(Jg^{-1/2}\phi)' dx-\int_{\refdom^-}(v^i,_k)'Jg^{-1/2}(A^k_i)'\phi dx\,.
\end{align*}
So, we find that
\begin{align}
I_1&=\int_{\refdom^-}g^{-1/2}v''_i JA^k_i\phi,_k dx+\int_{\refdom^-}g^{-1/2},_kv''_i JA^k_i\phi dx\nonumber\\
&\quad-\int_{\refdom^-}\text{div}_{\psi}v'(Jg^{-1/2}\phi)' dx-\int_{\refdom^-}(v^i,_k)'Jg^{-1/2}(A^k_i)'\phi dx\,.\label{equation1}
\end{align}
Integrating by parts and using Piola's identity, we have that
\begin{align}
-\int_{\refdom^-}(v^i,_k)'Jg^{-1/2}(A^k_i)'\phi dx
&=\int_{\refdom^-}(v^i)'J(A^k_i)'(g^{-1/2}\phi),_k dx\nonumber\\ &\quad-\int_{\Gamma_{\operatorname{perm}}}(v^i)'Jg^{-1/2}(A^k_i)'\phi N^k dx_1\nonumber\\
&\quad+\int_{\Gamma_{\operatorname{bot}}}(v^i)'Jg^{-1/2}(A^k_i)'\phi N^k dx_1.\label{equation2}
\end{align}
Substituting \eqref{equation2} into equation \eqref{equation1} and using the boundary condition (\ref{HS_ALE}g) we obtain that
\begin{align}
I_1&=\int_{\refdom^-}g^{-1/2}v''_i JA^k_i\phi,_k dx+\int_{\refdom^-}g^{-1/2},_kv''_i JA^k_i\phi dx\nonumber\\
&\quad-\int_{\refdom^-}\text{div}_{\psi}v'(Jg^{-1/2}\phi)' dx+\int_{\refdom^-}(v^i)'J(A^k_i)'(g^{-1/2}\phi),_k dx\nonumber\\ &\quad-\int_{\Gamma_{\operatorname{perm}}}(v^i)'Jg^{-1/2}(A^k_i)'\phi N^k dx_1\,.\label{equation3}
\end{align}
Thus, using H\"older inequality, the Sobolev embedding theorem and the trace theorem together with Lemma \ref{lower}, we obtain that
\begin{align}
|I_1|&\leq C\left(\|v''\|_{0,-}+\|\text{div}_\psi v'\|_{0,-}\right)\|\phi\|_{1,-}\nonumber\\
&\quad+C\|v\|_{1.5,-}\|\psi\|_{2.5,-}\left(1+\|\psi\|_{2.5,-}\right)\|\phi\|_{1,-}\nonumber\\
&\quad+C\|v\|_{1.75,-}\|\psi\|_{2.5,-}\|\phi\|_{1,-}\,.\label{gammaperm}
\end{align}
We can use the continuity of the normal component of the velocity through $\Gamma_{\operatorname{perm}}$
$$
\jump{v_i JA^k_i N^k }=0\text{ on }\Gamma_{\operatorname{perm}}\times[0,T] \;,
$$
to write
$$
\jump{v''_i JA^k_i N^k }=-\jump{v_i (JA^k_i N^k)'' }-2\jump{v'_i (JA^k_i N^k)' }\text{ on }\Gamma_{\operatorname{perm}}\times[0,T] \;.
$$
Thus, once that we have an estimate for $I_1$, we have that
\begin{align*}
I_2&=I_1-\int_{\Gamma_{\operatorname{perm}}}\jump{g^{-1/2}v'_i (JA^k_i)'N^k} \phi dx_1-\int_{\Gamma_{\operatorname{perm}}}\jump{g^{-1/2}v_i (JA^k_i)''N^k} \phi dx_1\\
&=I_1-\int_{\Gamma_{\operatorname{perm}}}\jump{g^{-1/2}v'_i (JA^k_i)'N^k} \phi dx_1-\int_{\Gamma_{\operatorname{perm}}}\jump{v_1} \frac{f'''}{\sqrt{1+(f')^2}} \phi dx_1.
\end{align*}
Then, we can obtain an estimate for $I_2$ using our previous estimate for $I_1$. Thus, using \eqref{almostalgebra} for the last term, we have that
\begin{align}
\left|I_2\right|&\leq C\left(\|v''\|_{0,-}+\|\text{div}_\psi v'\|_{0,-}\right)\|\phi\|_{1,-}\nonumber\\
&\quad+C\|v\|_{1.5,-}\|\psi\|_{2.5,-}\left(1+\|\psi\|_{2.5,-}\right)\|\phi\|_{1,-}\nonumber\\
&\quad+C\|v\|_{1.75,-}\|\psi\|_{2.5,-}\|\phi\|_{1,-}\nonumber\\
&\quad+C\|v\|_{1.25,\pm}|f|_{2.5}\|\phi\|_{1,\pm}\,.\label{gammaperm2}
\end{align}
Similarly, we compute
\begin{align*}
I^+&=\int_{\Gamma}g^{-1/2}v''_i JA^k_iN^k \phi dx_1+\int_{\Gamma_{\operatorname{perm}}}g^{-1/2}v''_i JA^k_iN^k \phi dx_1\\
&=\int_{\refdom^+}(g^{-1/2}v''_i JA^k_i\phi),_k dx_1,
\end{align*}
so, following the same steps as in the estimate \eqref{gammaperm}, we have that
\begin{align*}
\left|I^+\right|&\leq C\left(\|v''\|_{0,\pm}+\|\text{div}_\psi v'\|_{0,\pm}\right)\|\phi\|_{1,\pm}\\
&\quad+C\|v\|_{1.5,\pm}\|\psi\|_{2.5,\pm}\left(1+\|\psi\|_{2.5,\pm}\right)\|\phi\|_{1,\pm}\\
&\quad+C\|v\|_{1.75,\pm}\|\psi\|_{2.5,\pm}\|\phi\|_{1,\pm}.
\end{align*}
Then,
\begin{align}
\left|I_3\right|&\leq C\left(\|v''\|_{0,\pm}+\|\text{div}_\psi v'\|_{0,\pm}\right)\|\phi\|_{1,\pm}\nonumber\\
&\quad+C\|v\|_{1.5,\pm}\|\psi\|_{2.5,\pm}\left(1+\|\psi\|_{2.5,\pm}\right)\|\phi\|_{1,\pm}\nonumber\\
&\quad+C\|v\|_{1.75,\pm}\|\psi\|_{2.5,\pm}\|\phi\|_{1,\pm}\nonumber\\
&\quad+C\|v\|_{1.25,\pm}|f|_{2.5}\|\phi\|_{1,\pm}\,.\label{gamma}
\end{align}
Using duality, \eqref{gammaperm}, and that $\psi^-(t)=\psi^-(0)$,
\begin{align*}
|v''\cdot \emph{\texttt{n}}|_{H^{-0.5}(\partial\refdom^-)}&=\sup_{|\phi|_{0.5}\leq 1}\left|\int_{\Gamma_{\operatorname{perm}}}g^{-1/2}(v^-)''_i JA^k_iN^k \phi dx_1\right|\\
&\leq C\left(\|v''\|_{0,-}+\|\text{div}_\psi v'\|_{0,-}\right)\|\phi\|_{1,-}\nonumber\\
&\quad+C\|v\|_{1.5,-}\|\psi\|_{2.5,-}\left(1+\|\psi\|_{2.5,-}\right)\|\phi\|_{1,-}\nonumber\\
&\quad+C\|v\|_{1.75,-}\|\psi\|_{2.5,-}\|\phi\|_{1,-}\,.
\end{align*}
Integrating in time and using \eqref{ht1.5}, \eqref{Q1.5} and \eqref{div}, we have that
$$
\int_0^t|v''\cdot \emph{\texttt{n}}|_{H^{-0.5}(\partial\refdom^-)}^2ds\leq C\int_0^t\|v''\|_{0,\pm}^2ds+\sqrt{t}\mathcal{P}(E(t)).
$$

Similarly, using \eqref{gammaperm2} and \eqref{gamma}, we obtain that 
\begin{align*}
|v''\cdot n(t)|_{H^{-0.5}(\partial\refdom^+)}&=\sup_{|\phi|_{0.5}\leq 1}\left|\int_{\Gamma}g^{-1/2}v''_i JA^k_iN^k \phi dx_1\right|\\
&\quad +\sup_{|\phi|_{0.5}\leq 1}\left|\int_{\Gamma_{\operatorname{perm}}}g^{-1/2}(v^+)''_i JA^k_iN^k \phi dx_1\right|\\
&\leq C\left(\|v''\|_{0,\pm}+\|\text{div}_\psi v'\|_{0,\pm}\right)\|\phi\|_{1,\pm}\nonumber\\
&\quad+C\|v\|_{1.5,\pm}\|\psi\|_{2.5,\pm}\left(1+\|\psi\|_{2.5,\pm}\right)\|\phi\|_{1,\pm}\nonumber\\
&\quad+C\|v\|_{1.75,\pm}\|\psi\|_{2.5,\pm}\|\phi\|_{1,\pm}\nonumber\\
&\quad+C\|v\|_{1.25,\pm}|f|_{2.5}\|\phi\|_{1,\pm}\,.
\end{align*}
Using Lemma \ref{lower}, \eqref{div} and taking the lifespan $T$ small enough, we have that
$$
\|\text{div}_{\psi} v'\|_{0,\pm}=\|\text{div}_{\psi_0} v'\|_{0,-}+c\|A(t)-A(0)\|_{L^\infty}\|v'\|_{1,\pm}\leq \sqrt[4]{t}\mathcal{P}(E(t))+\sqrt[5]{t}\|v'\|_{1,\pm}.
$$
Consequently, we have that
\begin{align*}
\int_0^t|v''\cdot n(s)|_{H^{-0.5}(\partial\refdom^+)} ^2ds&\leq c\int_0^t\|v''\|_{0,\pm}^2ds+\sqrt{t}\mathcal{P}(E(t))\,.
\end{align*}
Finally, using \eqref{Q1.5}, \eqref{almostalgebra}, the Sobolev embedding theorem and trace theorem, we have that
\begin{align*}
\left|v''\cdot \left((\psi'(t))^\perp/|\psi'(t)|-(\psi'_0)^\perp/|\psi'_0|\right)\right|_{H^{-0.5}(\partial\refdom^+)}&\leq c\|v\|_{2,\pm}\|J(t)A(t)-J(0)A(0)\|_{1.25}\\
&\leq \sqrt{t}\mathcal{P}(E(t))\,.
\end{align*}
Collecting these estimates, we conclude that
$$
\int_0^t|v''\cdot \emph{\texttt{n}}|_{H^{-0.5}(\partial\refdom^+)}^2ds\leq c\int_0^t\|v''\|_{0,\pm}^2ds+\sqrt{t}\mathcal{P}(E(t)).
$$
\end{proof}
Making use of Lemma \ref{Hodge2} (for $v'$) and \eqref{ineqenergy}, we obtain that
\begin{align*}
\int_0^t\|v'(s)\|_{1,\pm}^2ds &\le C \int_0^t\|v''\|_{0,\pm}^2 + \|\curl_{\psi_0} v'\|_{0,\pm}^2 + \|\div_{\psi_0} v'\|_{0,\pm}^2ds \\
&\quad + C\int_0^t|v''\cdot \emph{\texttt{n}}|_{H^{-0.5}(\partial\refdom^+\cup \partial\refdom^-)}^2ds\\
&\leq C\int_0^t\|v''\|_{0,\pm}^2ds+\sqrt{t}\mathcal{P}(E(t))\\
&\leq \mathcal{M}_0+\sqrt{t}\mathcal{P}(E(t))\,,
\end{align*}
where $\mathcal{M}_0$ is a constant depending on the initial data. Equipped with this last estimate and using Lemma \ref{Hodge2} and trace theorem, we obtain that
\begin{align}
\int_0^t\|v(s)\|_{2,\pm}^2ds &\le C \int_0^t\|v\|_{0,\pm}^2 + \|\curl_{\psi_0} v\|_{1,\pm}^2 + \|\div_{\psi_0} v\|_{1,\pm}^2ds \nonumber\\
&\quad + C\int_0^t|v'\cdot\emph{\texttt{n}}|_{H^{0.5}(\partial\refdom^+)\cup \partial\refdom^-)}^2ds\nonumber\\
&\le C \int_0^t\|v\|_{0,\pm}^2 + \|\curl_{\psi_0} v\|_{1,\pm}^2 + \|\div_{\psi_0} v\|_{1,\pm}^2+\|v'\cdot \emph{\texttt{n}}\|_{1,\pm}^2ds\nonumber\\
&\leq \mathcal{M}_0+\sqrt{t}\mathcal{P}(E(t))\,.\label{vH2}
\end{align}

\subsection{Conclusion}
Collecting the estimates \eqref{lowv}, \eqref{ineqenergy}, \eqref{vH2} and using the lower bound for $J(t)$ and the Rayleigh-Taylor sign condition, we find that
\begin{equation}\label{eqenergy123}
E(t)\leq \mathcal{M}_0+\sqrt{t}\mathcal{Q}(E(t)).
\end{equation}
From 
$$
h\in L^2(0,T;H^{2.5}(\Gamma)),\;h_t\in L^2(0,T;H^{1.5}(\Gamma)),
$$
the energy $E(t)$ is continuous and this inequality implies the existence of a uniform time $T(h_0,f)$ such that
$$
E(t)\leq 2\mathcal{M}_0. 
$$
Estimates showing the uniqueness of the solution follows from standard energy methods and the detailed analysis shown in \cite{CGS}.

\section{Proof of Theorem \ref{localsmall}: Local well-posedness}\label{sec4}
Based on the smoothing argument in \cite{CoSh2007, CoSh2010} and following \cite{CGS}, for $0<\kappa,\epsilon\ll1$ small enough, we define
$$
\Omega^+_{\kappa,\epsilon}(0)=\{(x_1,x_2)\in \TT\times \RR,\;\; -1+f(x_1)<x_2<\mathcal{J}_\kappa\mathcal{J}_\kappa \mathcal{J}_\epsilon h(x_1,0)\}.
$$
Now, following Section \ref{sec2}, we can construct an $H^{2.5}-$class diffeomorphism 
$$
\psi^+_{\kappa,\epsilon}(0):\refdom^+ \to \Omega^+_{\kappa,\epsilon}(0),\;\psi^-:\refdom^- \to \Omega^-.
$$ 

We consider the so called $\epsilon\kappa-$problem: 
\begin{subequations}
\begin{alignat*}{2}
\frac{(v_{\kappa,\epsilon}^\pm)^i}{\beta^\pm}+(A_{\kappa,\epsilon}^\pm)^k_i(q_{\kappa,\epsilon}^\pm+\psi_{\kappa,\epsilon}^\pm\cdot e_2),_{k}&=0 \qquad&&\text{in}\quad\refdom^\pm\times[0,T]\,,\\
(A_{\kappa,\epsilon}^\pm)^k_i(v_{\kappa,\epsilon}^\pm)^i,_k&=0 \qquad&&\text{in}\quad\refdom^\pm\times[0,T]\,,\\
(h_{\kappa,\epsilon})_t(t)&= (v_{\kappa,\epsilon}^+)^i J_{\kappa,\epsilon}^+ (A_{\kappa,\epsilon}^+)^2_i \qquad&&\text{on}\quad\Gamma\times[0,T]\,,\\
h_{\kappa,\epsilon}&= \mathcal{J}_\epsilon h_0 \qquad&&\text{on}\quad\Gamma\times\{0\}\,,\\
q_{\kappa,\epsilon}^+&=0 &&\text{on}\quad\Gamma\times[0,T]\,,\\
\jump{q_{\kappa,\epsilon}}&=0 &&\text{on}\quad\Gamma_{\operatorname{perm}}\times[0,T]\,,\\
\jump{q_{\kappa,\epsilon},_k (A_{\kappa,\epsilon})^k_i J_{\kappa,\epsilon} (A_{\kappa,\epsilon})^2_i }&=-\Bigjump{\frac{1}{\beta}}v_{\kappa,\epsilon}^i J_{\kappa,\epsilon} (A_{\kappa,\epsilon})^2_i  &&\text{on}\quad\Gamma_{\operatorname{perm}}\times[0,T]\,\\
v_{\kappa,\epsilon}^-\cdot e_2&=0 &&\text{on}\quad\Gamma_{\operatorname{bot}}\times[0,T]\,,\\
\Delta \psi_{\kappa,\epsilon}^+(t)&= \Delta \psi_{\kappa,\epsilon}^+(0)  \quad&&\text{in}\quad \refdom^+\times[0,T]\,,\\
\psi^+_{\kappa,\epsilon}(t)&= (x_1,x_2)+\mathcal{J}_\kappa\mathcal{J}_\kappa h_{\kappa,\epsilon}(x_1,t)e_2 \quad &&\text{on}\quad \Gamma\times[0,T]\,,\\
\psi^+_{\kappa,\epsilon}(t)&= (x_1,x_2)+f(x_1)e_2 \quad &&\text{on}\quad \Gamma_{\operatorname{perm}}\times[0,T]\,.
\end{alignat*}
\end{subequations}

Note that the $\epsilon$-regularization affects only the initial interface $h_0$, while the $\kappa$ regularization appears also in the PDE system.

The construction of smooth approximate solutions can be achieved with a fixed point scheme. The detailed construction of solutions to this problem is given in \cite{CGS}. See also \cite{pinhomo} for a very different approach to the construction of solutions using the integral kernel method. 

Once we are equipped with a smooth approximate solution $h_{\kappa,\epsilon}(x_1,t)$, we have to obtain uniform estimates in $\epsilon$ and $\kappa$. These uniform estimates in $\epsilon$ and $\kappa$ will allow us to pass to the limit. However, we need to take the limits in the appropriate order; to be able to take the limit as $\kappa\rightarrow0$ we need to have a smooth initial data ($H^s$, $s>2.5$ is enough), so, we need $\epsilon>0$. However, the term requiring $\epsilon>0$ is not present when $\kappa=0$. Thus, we have to take first the limit as $\kappa\rightarrow0$ and then the limit as $\epsilon\rightarrow0$ (see \cite{CGS} for more details). We define
$$
E_{\kappa,\epsilon}(t)=\max_{0\leq s\leq t}|\mathcal{J}_\kappa h_{\kappa,\epsilon}(s)|^2_{2}+\int_0^t\|v_{\kappa,\epsilon}(s)\|_{2,\pm}^2+|\mathcal{J}_\kappa \mathcal{J}_\kappa h_{\kappa,\epsilon}(s)|^2_{2.5}ds
$$
and follow the estimates in in Section \ref{sec3}. We obtain the $\kappa$-uniform bound
$$
E_{\kappa,\epsilon}(t)\leq 2\mathcal{M}_{0,\epsilon}\;\forall\,0\leq t\leq T_\epsilon. 
$$

Passing to the limit in $\kappa$ we obtain an approximate solution $h_\epsilon(x_1,t)$. Now we define
$$
E_{\epsilon}(t)=\max_{0\leq s\leq t}|h_{\epsilon}(s)|^2_{2}+\int_0^t\|v_{\epsilon}(s)\|_{2,\pm}^2+|h_{\epsilon}(s)|^2_{2.5}ds
$$
and follow the estimates in in Section \ref{sec3}. Recalling that the $\epsilon$-regularization only affects the initial interface $h_0$, we obtain the $\epsilon$-uniform bound
$$
E_{\epsilon}(t)\leq 2\mathcal{M}_{0} \;\forall\,0\leq t\leq T. 
$$ 
Passing to the limit in $\epsilon$ we obtain a local strong solution to the one-phase Muskat problem with discontinuous permeability \eqref{laplacian}.  

\section{The Muskat problem in the semi-ALE formulation}\label{semiale}
We again use \eqref{ref_domain} and \eqref{ref_domain2}, respectively,  for our reference domains and boundaries. We let $N=e_2$ denote
the unit normal vector on $\Gamma$, $\Gamma_{\operatorname{perm}}$ and $\Gamma_{\operatorname{bot}}$. Due to Theorem \ref{localsmall}, there exists a local solution $(h,u,p)$ to the Muskat problem \eqref{laplacian}.

We define $\delta\psi^\pm$ as the solution to
\begin{subequations}\label{deltapsi+}
\begin{alignat}{2}
\Delta \delta\psi^+&= 0  \qquad&&\text{in}\quad \refdom^{+}\times[0,T]\,,\\
\delta\psi^+ &= h \qquad &&\text{on}\quad \Gamma\times[0,T]\,,\\
\delta\psi^+ &= f &&\text{on}\quad \Gamma_{\operatorname{perm}}\times[0,T] \,,
\end{alignat}
\end{subequations}
\begin{subequations}\label{deltapsi-}
\begin{alignat}{2}
\Delta \delta\psi^-&= 0  \qquad&&\text{in}\quad \refdom^{-}\times[0,T]\,,\\
\delta\psi^- &= f \qquad &&\text{on}\quad \Gamma_{\operatorname{perm}}\times[0,T]\,,\\
\delta\psi^- &= 0 &&\text{on}\quad \Gamma_{\operatorname{bot}}\times[0,T] \,,
\end{alignat}
\end{subequations}
and
\begin{equation}\label{psi}
\psi^\pm(x_1,x_2)=(x_1,x_2)+(0,\delta\psi^\pm)\qquad \text{in}\quad \refdom^{\pm}\times[0,T],
\end{equation}
For all $s\in \mathbb{R}$, elliptic estimates show that
\begin{equation}\label{psibound}
\|\psi-e\|_{s+1/2,\pm}=\|\delta\psi\|_{s+1/5,\pm}\leq c(|h|_s+|f|_s)\ll1,
\end{equation}
due to the smallness of the initial data $h_0$ and the function $f$. Thus, for $s=2$, due to  the Sobolev embedding and inverse function theorems, 
$\psi^\pm$ is a $H^{2.5}-$class diffeomorphism. We define $J^\pm= \det(\nabla\psi^\pm)$ and $A^\pm = (\nabla \psi^\pm)^{-1}$.
In particular, 
$$
J^\pm=1+\delta\psi^\pm_{,2},
$$
$$
A^\pm= (J^\pm)^{-1} \left[\begin{array}{cc}
(\psi^\pm)^2,_2 & - (\psi^\pm)^1,_2 \\
-(\psi^\pm)^2,_1 & (\psi^\pm)^1,_1
\end{array}
\right]=\frac{1}{1+\delta\psi^\pm,_{2}} \left[\begin{array}{cc}
1+\delta\psi^\pm,_{2} & 0 \\
-\delta\psi^\pm,_{1} & 1
\end{array}
\right]\,.
$$

We define our ALE variables $v=u\circ \psi$, $q=p\circ \psi$ as in section \ref{sec2}. The new variables $v,q$ solve the system \eqref{HS_ALE}. We define our new semi-ALE variables
$$
w^i=JA^i_jv^j,\; Q=q+x_2.
$$
In particular, using Piola's identity and the equality $JA^i_jN^i=\sqrt{g}n^j$ valid on $\Gamma_{bot},$ $\Gamma_{\operatorname{perm}}$ and $\Gamma$, we have that
$$
w^i,_i=JA^i_jv^j,_i=0,
$$
and
$$
w_2^-=JA^2_jv^-_j=\delta^j_2v^-_j=v^-_2 \text{ on }\Gamma_{bot}.
$$
We also have that
$$
w^jN^j=JA^j_kv^kN^j=\sqrt{g}n_k v_k,
$$
so, due to the incompressibility of the fluid,
$$
\jump{w^jN^j}=0\text{ on }\Gamma_{\operatorname{perm}}
$$
Thus, these new variables solve 
\begin{subequations}\label{HS_semi_ALE}
\begin{alignat}{2}
\frac{(w^\pm)^j}{\beta^\pm}+J^\pm(A^\pm)^{j}_i(A^\pm)^k_i(Q^\pm+\delta\psi^\pm),_{k}&=0 \qquad&&\text{in}\quad\refdom^\pm\times[0,T]\,,\\
\text{div}\, w^\pm&=0 \qquad&&\text{in}\quad\refdom^\pm\times[0,T]\,,\\
h_t&= w^+_2\qquad&&\text{on}\quad\Gamma\times[0,T]\,,\\
h&= h_0\qquad&&\text{on}\quad\Gamma\times\{0\}\,,\\
Q^+&=0 &&\text{on}\quad\Gamma\times[0,T]\,,\\
\jump{Q}&=0 &&\text{on}\quad\Gamma_{\operatorname{perm}}\times[0,T]\,,\\
\jump{(Q+\delta\psi),_k A^k_i J A^j_i N^j}&=-\Bigjump{\frac{1}{\beta}}w^+_j N^j &&\text{on}\quad\Gamma_{\operatorname{perm}}\times[0,T]\,\\
w^-_2&=0 &&\text{on}\quad\Gamma_{\operatorname{bot}}\times[0,T]\,.
\end{alignat}
\end{subequations}
Equivalently,
\begin{subequations}\label{HS_semi_ALE2}
\begin{alignat}{2}
\frac{w^\pm}{\beta^\pm}+\nabla(Q^\pm+\delta\psi^\pm)&=\left(\text{Id}-\frac{(\nabla\psi^\pm)^T
\nabla\psi^\pm}{J^\pm}\right)\frac{w^\pm}{\beta^\pm} \qquad&&\text{in}\quad\refdom^\pm\times[0,T]\,,\\
\text{div}\, w^\pm&=0 \qquad&&\text{in}\quad\refdom^\pm\times[0,T]\,,\\
h_t&= w^+_2\qquad&&\text{on}\quad\Gamma\times[0,T]\,,\\
h&= h_0\qquad&&\text{on}\quad\Gamma\times\{0\}\,,\\
Q^+&=0 &&\text{on}\quad\Gamma\times[0,T]\,,\\
\jump{Q}&=0 &&\text{on}\quad\Gamma_{\operatorname{perm}}\times[0,T]\,,\\
\jump{(Q+\delta\psi),_k A^k_i J A^j_i N^j}&=-\Bigjump{\frac{1}{\beta}}w^+_j N^j &&\text{on}\quad\Gamma_{\operatorname{perm}}\times[0,T]\,\\
w^-_2&=0 &&\text{on}\quad\Gamma_{\operatorname{bot}}\times[0,T]\,.
\end{alignat}
\end{subequations}
Using the particular form of $\nabla\psi^\pm=\nabla(x_1,x_2+\delta\psi^\pm)\,,$
we have that
\begin{equation}\label{eq1}
\left(\text{Id}-\frac{(\nabla\psi)^T\nabla\psi^\pm }{J^\pm}\right)\frac{w^\pm}{\beta^\pm}=\left(\begin{array}{cc}\delta\psi^\pm,_{2}-(\delta\psi^\pm,_{1})^2 & -\delta\psi^\pm,_{1}(1+\delta\psi^\pm,_{2})\\
-\delta\psi^\pm,_{1}(1+\delta\psi^\pm,_{2}) & -\delta\psi^\pm,_{2}(1+\delta\psi^\pm,_{2})\end{array}\right)\frac{w^\pm}{\beta^\pm J^\pm},
\end{equation}
and we see that the right hand side of (\ref{HS_semi_ALE2}a) contains all the non-linear terms.

\section{Proof of Theorem \ref{globalsmall}: Global well-posedness when $f=0$}\label{secglobal}
We define the energy $\mathscr{E}(t)$ and energy dissipation $\mathscr{D}(t)$ as follows
\begin{equation}\label{energyE}
\mathscr{E}(t)=|h''(t)|_0^2,\;\mathscr{D}(t)=\|w''(t)\|_{0,\pm}^2.
\end{equation}
As $h(\cdot,t)$ has zero mean, the Poincar\'e inequality shows that 
$$
|h|_n\equiv |h^{n)}|_0,\;n\in \mathbb{Z}^+.
$$

By hypothesis, the initial data $h_0$ satisfies the smallness condition
\begin{equation}\label{smallnesscondition}
|h_0|_2<\mathscr{C},
\end{equation}
for $\mathscr{C}$ a small enough constant. 

Note that due to Theorem \ref{localsmall}, there exists a time $T$ such that
$$
\mathscr{E}(t)<2\mathscr{E}(0)<2\mathscr{C}^2\ll1.
$$

Let us sketch the proof of the theorem. Our goal is to prove that, for initial data satisfying the smallness condition \eqref{smallnesscondition}, the system remains in the Rayleigh-Taylor stable regime and verifies the following estimates 
$$
\sup_{0\leq t}|h(t)|_2\leq |h_0|_2<\mathscr{C},\;\int_0^t\|w(s)\|_{2,\pm}^2ds\leq C_1\mathscr{C}^2,
$$
where $C_1$ is a time independent constant. Then, a standard continuation argument for ODE in Banach spaces implies that the local solution provided by Theorem \ref{localsmall} is, in fact, a global-in-time solution.

This goal is achieved in several steps. First we prove that, for $\mathscr{C}$ small enough, the system remains in the Rayleigh-Taylor stable case and verifies the estimate
\begin{equation}
\frac{d}{dt}\mathscr{E}+\mathscr{D}\leq \mathscr{D}\sqrt{\mathscr{E}}\mathcal{P}(\mathscr{E})\quad \forall 0\leq t\leq T\label{sketch}.
\end{equation}
This inequality implies the decay of $\mathscr{E}$ for small enough initial data; however, to obtain the rate of decay, we need to relate the energy $\mathscr{E}$ with the energy dissipation $\mathscr{D}$. To do that we establish the estimate
$$
|h''|_{0.5}\leq C\left(\|w\|_{2,\pm}|h|_{1.75}+\|w''\|_{0,\pm}\right).
$$
This estimate relies on Darcy's law. Using the smallness condition \eqref{smallnesscondition} and the Hodge decomposition elliptic estimate (see Lemma \ref{Hodge2}), we prove that tangential derivatives of the velocity are enough to control the full $H^2$ norm of the velocity field:
$$
\|w\|_{2,\pm} \leq C \Big[\|w\|_{1.5,\pm} + \|w''\|_{0,\pm}\Big]\,.
$$
Finally, using the smallness of $\mathscr{C}$, we can relate the energy $\mathscr{E}$ with the energy dissipation $\mathscr{D}$ as follows
$$
\mathscr{E}\leq |h''|_{0.5}\leq C\|w''\|_{0,\pm}=C\mathscr{D}.
$$
Thus, using the smallness of $\mathscr{C}$, the previous energy estimate \eqref{sketch} is equivalent to
\begin{align}\label{decay3}
\gamma\mathscr{E}+\frac{d}{dt}\mathscr{E}
&\leq 0.
\end{align}
for a certain $\gamma>0$. Note that due to the definition of $\mathscr{E}$, we have that \eqref{decay3} implies
$$
|h''(t)|_0\leq |h_0''|_0e^{-\gamma t/2}.
$$
We also obtain that
$$
\int_0^t\|w(s)\|_{2,\pm}^2ds\leq C(\mathscr{E}(0),\beta^\pm).
$$

\subsection{Pressure estimates}
Recall that, as $\delta\psi^-=0$ on $\Gamma_{bot},$ we have that $A$ verifies
$$
(A^-)^{2}_i((A^-)^1_i=0,\; J(A^-)^{2}_i((A^-)^2_i=(A^-)^2_2=\frac{1}{J^-}=\frac{1}{1+\delta\psi^-,_2}\text{ on }\Gamma_{bot}.
$$
Note also that (\ref{HS_semi_ALE2}f) is equivalent to
$$
\jump{\beta(Q+\delta\psi),_k A^k_i J A^j_i N^j}=0 \text{ on }\Gamma_{\operatorname{perm}}\times[0,T],
$$
thus, multiplying (\ref{HS_semi_ALE2}a) by $\beta^\pm$ and using the divergence free condition (\ref{HS_semi_ALE2}b), the modified pressure $Q$ solves
\begin{subequations}\label{Q}
\begin{alignat}{2}
\beta^\pm J^\pm(A^\pm)^{j}_i((A^\pm)^k_i(Q^\pm+\delta\psi^\pm),_{k}),_j&=0 \qquad&&\text{in}\quad\refdom^\pm\times[0,T]\,,\\
Q^+&=0 &&\text{on}\quad\Gamma\times[0,T]\,,\\
\jump{Q}&=0 &&\text{on}\quad\Gamma_{\operatorname{perm}}\times[0,T]\,,\\
\jump{\beta(Q+\delta\psi),_k A^k_i J A^j_i N^j}&=0 &&\text{on}\quad\Gamma_{\operatorname{perm}}\times[0,T]\,\\
\beta^-(A^-)^2_2(Q^-+\delta\psi^-),_{2}&=0 &&\text{on}\quad\Gamma_{\operatorname{bot}}\times[0,T]\,.
\end{alignat}
\end{subequations}

Equivalently, using \eqref{deltapsi+} and \eqref{deltapsi-}, \eqref{Q} can be written as
\begin{subequations}\label{Qv2}
\begin{alignat}{2}
\beta\Delta Q&=\beta\text{div}\,\left[(\text{Id}- JAA^T)\nabla(Q+\delta\psi)\right] \qquad&&\text{in}\quad\refdom^\pm\times[0,T]\,,\\
Q^+&=0 &&\text{on}\quad\Gamma\times[0,T]\,,\\
\jump{Q}&=0 &&\text{on}\quad\Gamma_{\operatorname{perm}}\times[0,T]\,,\\
\jump{\beta Q,_2}&=\jump{\beta Q,_k (\delta^k_2-A^k_i J A^2_i)}-\jump{\beta\delta\psi,_k A^k_i J A^2_i } &&\text{on}\quad\Gamma_{\operatorname{perm}}\times[0,T]\,\\
\beta^-Q^-,_2&=\beta^-(Q^-,_2-1)\frac{\delta\psi^-,_{2}}{1+\delta\psi^-,_2} &&\text{on}\quad\Gamma_{\operatorname{bot}}\times[0,T]\,,
\end{alignat}
\end{subequations}
where
$$
\text{Id}-JAA^T=\left[\begin{array}{cc}
-\delta\psi^\pm,_{2} & \delta\psi^\pm,_1 \\
\delta\psi^\pm,_{1} & \frac{\delta\psi^\pm,_2-(\delta\psi^\pm,_1)^2}{1+\delta\psi,_2^\pm}
\end{array}
\right],
$$
and, using (\ref{deltapsi+}c) and (\ref{deltapsi-}b), 
$$
A^1_i J A^2_i=0,\;A^2_i J A^2_i=\frac{1}{1+\delta\psi,_2}\text{ on }\Gamma_{\operatorname{perm}}.
$$

Elliptic estimates and trace theorem then show that
\begin{align*}
\|\nabla Q\|_{1.5,\pm}&\leq C\bigg{(}\|(\text{Id}- JAA^T)\nabla(Q+\delta\psi)\|_{1.5,\pm}+\left|\jump{\beta Q,_k (\delta^k_2-A^k_i J A^2_i)}\right|_{1}\\
&\quad +\left|\jump{\beta\delta\psi,_k A^k_i J A^2_i }\right|_1 +\left|\beta^-(Q^-,_2-1)\frac{\delta\psi^-,_{2}}{1+\delta\psi^-,_2}\right|_1\bigg{)}\\
&\leq C\bigg{(}\|\text{Id}- JAA^T\|_{L^\infty}\|\nabla Q\|_{1.5,\pm}+\|\text{Id}- JAA^T\|_{1.5,\pm}\|\nabla Q\|_{L^\infty}\\
&\quad+\|\text{Id}- JAA^T\|_{L^\infty}\|\nabla \delta\psi\|_{1.5,\pm}+\|\text{Id}- JAA^T\|_{1.5,\pm}\|\nabla \delta\psi\|_{L^\infty}\\
&\quad +\left\|\frac{\delta\psi^-,_{2}}{1+\delta\psi^-,_2}\right\|_{L^\infty}\|\nabla Q\|_{1.5,\pm}+\left\|\frac{\delta\psi,_{2}}{1+\delta\psi,_2}\right\|_{1.5,\pm}\|\nabla Q\|_{L^\infty}\\
&\quad +\left\|\frac{\delta\psi,_{2}}{1+\delta\psi,_2}\right\|_{1.5,\pm}\bigg{)}.
\end{align*}
Thus, using \eqref{psibound}, we have that
\begin{equation}\label{Q2.5}
\|\nabla Q\|_{1.5,\pm}\leq C|h|_2^2\mathcal{P}(|h|_2^2).
\end{equation}

Using (\ref{HS_semi_ALE}a), we obtain that
\begin{equation}\label{w1.5}
\|w\|_{1.5,\pm}\leq C|h|_2^2\mathcal{P}(|h|_2^2).
\end{equation}

\subsection{The Rayleigh-Taylor stability condition}
In the previous ALE variables $(v,q)$, the Rayleigh-Taylor stability condition \eqref{RT} reads
$$
- A^k_jq,_kJA_j^iN^i= -A^2_jq,_2JA_j^2> 0 \ \text{ on } \ \Gamma\,.
$$
In our semi-ALE modified pressure, we have that the Rayleigh-Taylor stability condition is equivalent to
$$
-A^2_j(Q,_2-1)JA_j^2> 0 \ \text{ on } \ \Gamma\,,
$$
or, using (\ref{HS_semi_ALE}a,c),
$$
h_t+JA^2_iA^2_i(1+\delta\psi^+,_2)+JA^2_iA^1_ih'+> 0 \ \text{ on } \ \Gamma\,.
$$
Thus, the Rayleigh-Taylor stability condition is equivalent to
$$
w_2^++1> 0 \ \text{ on } \ \Gamma\,.
$$
Then, we see that when $\|w_2\|_{L^\infty}\ll1$, the Rayleigh-Taylor stability condition holds. Thus, using the Sobolev embedding theorem together with \eqref{w1.5}, we have that
\begin{equation*}
\|w_2^+\|_{L^\infty(\Gamma)}\leq C\|w \|_{1.5,\pm}\leq C|h|_2^2\mathcal{P}(|h|_2^2)\ll1.
\end{equation*}

\subsection{The estimates in $L^2(0,T;H^{2.5}(\Gamma))$}
Recalling (\ref{HS_semi_ALE2}a,d), we have that
\begin{equation*}
w_1^++\beta^+h'=(\delta\psi^+,_{2}-(h')^2)\frac{w_1^+}{1+\delta\psi^+,_2}-h'w_2^+\text{ on }\Gamma.
\end{equation*}
Thus,
\begin{equation*}
\beta^+h'=-(1+(h')^2)\frac{w_1^+}{1+\delta\psi^+,_2}-h'w_2^+\text{ on }\Gamma,
\end{equation*}
and
\begin{equation*}
-\beta^+h''=(1+(h')^2)\frac{(w_1^+),_1}{1+\delta\psi^+,_2}+h'(w_2^+),_1+\frac{2h'h''w_1^+}{1+\delta\psi^+,_2}+h''w_2^+-\frac{1+(h')^2w_1^+}{(1+\delta\psi^+,_2)^2}\delta\psi,_{12}
\end{equation*}
Using the smallness of $|h|_2$, \eqref{almostalgebra} and estimate \eqref{w1.5}, we then estimate
\begin{align*}
|h''|_{0.5}&\leq C\left(|w'_1|_{0.5}+|w'_2|_{0.5}\right)
\end{align*}
The following Lemma is an immediate consequence of the standard normal trace theorem (see Temam \cite{temam2001navier}):
\begin{lemma}\label{normaltrace} Suppose that $v'\in L^2(\Omega)$ with $\text{div}v\in L^2(\Omega)$. Then $v'\cdot N \in H^ {-\frac{1}{2}} (\partial\Omega)$
and
\begin{equation*}
\|v' \cdot N\|_{H^{-1/2}(\partial\Omega)}\leq C\left(\|v' \|_{L^2(\Omega)}+\| \operatorname{div} v \|_ { L^2(\Omega) }  \right).
\end{equation*}
\end{lemma}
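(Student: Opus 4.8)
The plan is to recognize Lemma \ref{normaltrace} as a direct instance of the classical normal trace (Gauss--Green) theorem for $L^2$ vector fields having $L^2$ divergence, in the form recorded in Temam \cite{temam2001navier}, applied to the vector field $v'$; I would reproduce its short proof rather than merely cite it. First I would use that the boundary $\partial\Omega$ in question (here $\partial\refdom^+$ or $\partial\refdom^-$) is smooth, so that the trace operator $H^1(\Omega)\to H^{1/2}(\partial\Omega)$ admits a bounded linear right inverse $E$. Then, for $\phi\in H^{1/2}(\partial\Omega)$, I would put $\Phi=E\phi$, so $\|\Phi\|_{H^1(\Omega)}\le C\|\phi\|_{H^{1/2}(\partial\Omega)}$, and define the pairing
$$
\langle v'\cdot N,\phi\rangle:=\int_\Omega v'\cdot\nabla\Phi\,dx+\int_\Omega(\operatorname{div} v')\,\Phi\,dx .
$$
Two applications of the Cauchy--Schwarz inequality yield $|\langle v'\cdot N,\phi\rangle|\le\big(\|v'\|_{L^2(\Omega)}+\|\operatorname{div} v'\|_{L^2(\Omega)}\big)\|\Phi\|_{H^1(\Omega)}\le C\big(\|v'\|_{L^2(\Omega)}+\|\operatorname{div} v'\|_{L^2(\Omega)}\big)\|\phi\|_{H^{1/2}(\partial\Omega)}$, which identifies $v'\cdot N$ as an element of $H^{-1/2}(\partial\Omega)$ satisfying the asserted estimate. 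I note that in every application of this lemma the underlying field is divergence free, so that $\operatorname{div} v'=(\operatorname{div} v)'=0$ and the right-hand side reduces to $C\|v'\|_{L^2(\Omega)}$, consistent with the way the bound is written in the statement.

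The only point that requires a word of justification — and the only conceivable obstacle — is that the pairing defined above must be independent of the chosen extension $\Phi$ of $\phi$. I would dispatch this by the standard density argument: when $v'\in C^\infty(\overline\Omega)$ the defining identity is simply the divergence theorem $\langle v'\cdot N,\phi\rangle=\int_{\partial\Omega}(v'\cdot N)\phi\,dS$, which is manifestly extension-independent; since $C^\infty(\overline\Omega)$ is dense in $\{w\in L^2(\Omega):\operatorname{div} w\in L^2(\Omega)\}$ equipped with the graph norm $\|w\|_{L^2(\Omega)}+\|\operatorname{div} w\|_{L^2(\Omega)}$, and both sides of that identity are continuous in this norm, the extension-independence and the bound pass to the limit. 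No nonlinearity, geometry, or free-boundary effect enters, so apart from this routine verification there is nothing to overcome; this is why the statement is flagged as an immediate consequence of the standard theory.
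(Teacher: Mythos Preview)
Your proposal is correct and coincides with the paper's approach: the paper does not give a proof at all but simply states that the lemma is an immediate consequence of the standard normal trace theorem in Temam \cite{temam2001navier}, and what you have written is precisely the standard density-and-duality argument that underlies that theorem. Your additional remark that $\operatorname{div} v'=(\operatorname{div} v)'$ explains the form of the right-hand side is also exactly how the lemma is used in the paper.
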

Using Lemma \ref{normaltrace} for $v=w'$, we obtain that
\begin{equation*}
|w'' _2|_{-0.5}=|w'' \cdot N|_{-0.5}\leq C\|w'' \|_{0,\pm}.
\end{equation*}
Using Lemma \ref{normaltrace} for $v=(w^\perp)'$,
\begin{equation*}
|w_1''|_{-0.5}=|(w^\perp)'' \cdot N|_{-0.5}\leq C\left(\|w'' \|_{0,\pm}+\| \operatorname{div} w,_1^\perp \|_ {0,\pm }  \right)=C\left(\|w'' \|_{0,\pm}+\| \operatorname{curl} w' \|_ {0,\pm }  \right).
\end{equation*}
Using that $\operatorname{curl} u=0$, we find that (see Cheng, Granero-Belinch\'on \& Shkoller \cite[Section 5.1.7]{CGS})
\begin{equation}\label{curlw}
\|\curl w\|_{1,\pm}\leq C\|w\|_{2,\pm}|h|_{1.75}+C|h|_{2.5}|h|_{1.75}+C|h|_{2}\|w\|_{1.5,\pm}^2.
\end{equation}
Thus, using the Poincar\'e inequality together with \eqref{w1.5} and the smallness of $|h|_2$, we find that
\begin{align}\label{h2.5global}
|h''|_{0.5}&\leq C\left(\|w\|_{2,\pm}|h|_{1.75}+\|w''\|_{0,\pm}\right).
\end{align}
\subsection{Hodge decomposition elliptic estimates} Using Lemma \ref{Hodge2} (with $\psi_0=(x_1,x_2)$) we have that
$$
\|w\|_{2,\pm} \le C \Big[\|w\|_{0,\pm} + \|\curl w\|_{1,\pm} + \|\div w\|_{1,\pm} + |w_2|_{1.5}\Big]\,.
$$

As a consequence of \eqref{curlw}, \eqref{h2.5global}, the Poincar\'e inequality and Lemma \ref{normaltrace}, we find that
\begin{align*}
\|w\|_{2,\pm} &\leq C \Big[\|w\|_{1.5,\pm} + \|\curl w\|_{1,\pm} + |w''\cdot N|_{-0.5}\Big]\\
&\leq C \Big[\|w\|_{1.5,\pm} + \|w\|_{2,\pm}|h|_{1.75}+|h|_{2.5}|h|_{1.75}+|h|_{2}\|w\|_{1.5,\pm}^2 + \|w''\|_{0,\pm}\Big]\\
&\leq C \Big[\|w\|_{1.5,\pm} + \|w\|_{2,\pm}|h|_{1.75}+|h|_{2}\|w\|_{2,\pm}\|w\|_{1.5,\pm} + \|w''\|_{0,\pm}\Big]\,.
\end{align*}
As a consequence of the smallness of $|h|_2$ and \eqref{w1.5}, we have that
\begin{align}\label{w2}
\|w\|_{2,\pm} &\leq C \Big[\|w\|_{1.5,\pm} + \|w''\|_{0,\pm}\Big]\,.
\end{align}
Substituting this last inequality into \eqref{h2.5global} together with $|h|_2\ll1$ and \eqref{w1.5}, we obtain that
\begin{align}
|h''|_{0.5}&\leq C\left(\|w\|_{1.5,\pm}|h|_{1.75}+\|w''\|_{0,\pm}\right)\nonumber\\
&\leq C\|w''\|_{0,\pm}\label{h2.5global2}
\end{align}

\subsection{The energy estimates}
The goal in this section is to prove that, the solution verifies the following bound
$$
\frac{\mathscr{D}}{\max\{\beta^+,\beta^-\}}+\frac{1}{2}\frac{d}{dt}\mathscr{E}\leq \mathscr{D}\mathcal{Q}(\mathscr{E}),
$$
where $\mathcal{Q}$ is a polynomial such that $\mathcal{Q}(0)=0$. Then, for small enough initial data, we have that
$$
C\mathscr{D}+\frac{d}{dt}\mathscr{E}\leq0,
$$
and we conclude the decay of $\mathscr{E}$. To obtain the exponential rate of decay in Theorem \ref{globalsmall}, we will invoke \eqref{h2.5global2} and Poincar\'e inequality.

We take two tangential derivatives of (\ref{HS_semi_ALE2}a) and test against $w''$. We obtain that
\begin{equation}\label{energyineqw}
\int_{\refdom^+\cup\refdom^-}\frac{|w''|^2}{\beta}dx+\int_{\refdom^+\cup\refdom^-}(Q+\delta\psi)'',_iw''_idx=\int_{\refdom^+\cup\refdom^-}\left[\left(\text{Id}
-\frac{(\nabla\psi)^T
\nabla\psi}{J}\right)\frac{w}{\beta}\right]''w''dx.
\end{equation}
Integrating by parts and using \eqref{HS_semi_ALE2}, we have that
\begin{align*}
\int_{\refdom^+\cup\refdom^-}(Q+\delta\psi)'',_iw''_idx&=\int_{\Gamma}(Q+\delta\psi)''w''_iN^idx_1-\int_{\Gamma_{\operatorname{perm}}}\jump{(Q+\delta\psi)''w''_iN^i}dx_1\\
&\quad -\int_{\Gamma_{bot}}(Q+\delta\psi)''(w''_i)N^idx_1\\
&=\int_{\Gamma}h''h_t''dx_1.
\end{align*}
Thus due to \eqref{eq1}, \eqref{psibound}, \eqref{w1.5}, \eqref{h2.5global2}, H\"older's inequality and the Sobolev embedding theorem, \eqref{energyineqw} is equivalent to
\begin{align*}
\frac{\mathscr{D}}{\max\{\beta^+,\beta^-\}}+\frac{1}{2}\frac{d}{dt}\mathscr{E}&\leq \int_{\refdom^+\cup\refdom^-}\left[\left(\text{Id}
-\frac{(\nabla\psi)^T
\nabla\psi}{J}\right)\frac{w}{\beta}\right]''w''dx\\
&\leq \mathscr{D}\sqrt{\mathscr{E}}\mathcal{P}(\mathscr{E})+\bigg{|}\int_{\refdom^+\cup\refdom^-}\left(\text{Id}
-\frac{(\nabla\psi)^T
\nabla\psi}{J}\right)''\frac{w}{\beta}w''dx\bigg{|}\\
&\leq\mathscr{D}\sqrt{\mathscr{E}}\mathcal{P}(\mathscr{E})+\|\nabla\delta\psi\|_{2,\pm}\mathcal{P}(\mathscr{E})\|w\|_{L^\infty}\sqrt{\mathscr{D}}\\
&\leq \mathscr{D}\sqrt{\mathscr{E}}\mathcal{P}(\mathscr{E})+|h|_{2.5}\sqrt{\mathscr{E}}\mathcal{P}(\mathscr{E})\sqrt{\mathscr{D}}\\
&\leq \mathscr{D}\sqrt{\mathscr{E}}\mathcal{P}(\mathscr{E}),
\end{align*}
where we have used Young's inequality. We note that due to Theorem \ref{localsmall}, we have that
$$
\mathscr{E}(t)\leq 2\mathscr{E}(0) \text{ for all }0\leq t\leq T^*,
$$
thus
$$
\mathcal{P}(\mathscr{E}(t))\leq C(h_0)\text{ for all }0\leq t\leq T^*.
$$
Taking $\mathscr{C}$ small enough, we obtain the inequality
\begin{align*}
\epsilon\mathscr{D}+\frac{1}{2}\frac{d}{dt}\mathscr{E}&\leq 0,
\end{align*}
for certain $\epsilon=\epsilon(h_0,\beta)$. Thus, using Poincar\'e inequality and \eqref{h2.5global2}
\begin{align*}
\gamma\mathscr{E}+\frac{d}{dt}\mathscr{E}
&\leq 0,
\end{align*}
for certain $\gamma=\gamma(h_0,\beta)$. 
\begin{align}\label{decay}
|h''(t)|_0\leq |h''_0|_0e^{-\gamma t/2}.
\end{align}
We also obtain the bound
\begin{align}\label{boundw2}
\int_0^t\mathscr{D}(s)ds\leq C(h_0,\beta).
\end{align}

\section*{Acknowledgements}
RGB is funded by the Labex MILYON and the Grant MTM2014-59488-P from the Ministerio de Econom\'ia y Competitividad (MINECO, Spain). SS was supported by the National Science Foundation under grant  DMS-1301380.

\bibliographystyle{abbrv}

\end{document}